          \def\RR{\mathbb{R}}
          \def\pa{\partial}
          \newtheorem{thm}{Theorem}[section]
          \newtheorem{lem}[thm]{Lemma}
\newenvironment{proof}{\noindent{\bf Proof.~}}
{{\mbox{}\hfill {\small \fbox{}}\\}}
\begin{document}

\title{Incompressible limit of a mechanical model for tissue growth with non-overlapping constraint.
\footnote{S.H. acknowledges support from the Imperial/Crick PhD program. N.V. acknowledges partial support from the ANR blanche project {\it Kibord} No ANR-13-BS01-0004 funded by the French Ministry of Research.
Part of this work has been done while N.V. was a CNRS fellow at Imperial College, he is really grateful to the CNRS and to Imperial College for the opportunity of this visit.
The authors would like to express their sincere gratitude to Pierre Degond for his help and its suggestions during this work.}
}

\author{Sophie Hecht\thanks{ Francis Crick Institute,
        1 Midland Rd, Kings Cross, London NW1 1AT, UK - 
        Imperial College London, 
        South Kensington Campus
London SW7 2AZ, UK email(sophie.hecht15@imperial.ac.uk)}
\and{Nicolas Vauchelet\thanks{ LAGA - UMR 7539,
Institut Galilée,
Université Paris 13,
99 avenue Jean-Baptiste Clément,
93430 Villetaneuse - France,  email(vauchelet@math.univ-paris13.fr) }}}

\date{}

         \maketitle
\thispagestyle{empty}

          \begin{abstract}
A mathematical model for tissue growth is considered. This model describes the dynamics of the density of cells due to pressure forces and proliferation. It is known that such cell population model converges at the incompressible limit towards a Hele-Shaw type free boundary problem. The novelty of this work is to impose a non-overlapping constraint. This constraint is important to be satisfied in many applications. One way to guarantee this non-overlapping constraint is to choose a singular pressure law. The aim of this paper is to prove that, although the pressure law has a singularity, the incompressible limit leads to the same Hele-Shaw free boundary problem.
          \end{abstract}

\bigskip
{\bf Keywords:}
Nonlinear parabolic equation; Incompressible limit; Free boundary problem; Tissue growth modelling.

\bigskip
{\bf AMS Subject Classification:}
35K55; 76D27; 92C50.

\section{Introduction}

Mathematical models are now commonly used in the study of growth of cell tissue.
For instance, a wide literature is now available on the study of the tumor growth through mathematical modeling and numerical simulations \cite{Bellomo1,Bellomo2,Friedman,Lowengrub}. 
In such models, we may distinguish two kinds of description: Either they describe the dynamics of cell population density (see e.g. \cite{Byrne,BenAmar}), or they consider the geometric motion of the tissue through a free boundary problem of Hele-Shaw type (see e.g. \cite{Greenspan,FriedmanHu,Cui,Lowengrub}).
Recently the link between both descriptions has been investigated from a mathematical point of view thanks to an incompressible limit \cite{PQV}.

In this paper, we depart from the simplest cell population model as proposed in \cite{BD}.
In this model the dynamics of the cell density is driven by pressure forces and cell multiplication.
More precisely, let us denote by $n(t,x)$ the cell density depending on time $t\geq 0$ and position $x\in \RR^d$, and by $p$ the mechanical pressure. The mechanical pressure depends only on the cell density and is given by a state law $p=\Pi(n)$.
Cell proliferation is modelled by a pressure-limited growth function denoted $G$.
Mechanical pressure generates cells displacement with a velocity whose field $v$ is computed thanks to the Darcy's law.
After normalizing all coefficients, the model reads
\begin{align*}
& \pa_t n + \nabla\cdot (n v) = n G(p),  \quad \mbox{ on } \RR^+\times\RR^d,  \\
& v = - \nabla p, \qquad p = \Pi(n).
\end{align*}
The choice $\Pi(n)= \frac{\gamma}{\gamma -1}n^{\gamma-1}$ has been taken in \cite{PQV,PQTV,PV}.
This choice allows to recover the well-known porous medium equation for which a lot of nice mathematical properties are now well-established (see e.g. \cite{Vazquez}). The incompressible limit is then obtained by letting $\gamma$ going to $+\infty$.

However, this state law does not prevent cells to overlap. In fact, it is not possible with this choice to avoid the cell density to take value above $1$ (which corresponds here to the maximal packing density after normalization).
A convenient way to avoid cells overlapping is to consider a pressure law which becomes singular when the cell density approaches $1$. 
Such type of singularity is encountered, for instance, in the kinetic theory of dense gases where the interaction between molecules is strongly repulsive at very short distance \cite{Chapman}.
Similar singular pressure laws have been also considered in \cite{Degond,Degond2} to model collective motion, in \cite{Berthelin,Berthelin2} to model the traffic flow, and in \cite{Ewelina} to model crowd motion (see also the review article \cite{Maury}). 
Then, in order to fit this non-overlapping constraint, we consider the following simple model of pressure law given by
$$
P(n)=\epsilon \frac{n}{1-n}.
$$

Finally, the model under study in this paper reads, for $\epsilon>0$,
\begin{align}
\partial_t n_\epsilon - \nabla \cdot (n_\epsilon \nabla p_\epsilon) = n_\epsilon G(p_\epsilon),  \label{eq:n} \\
p_\epsilon = P(n_\epsilon) = \epsilon \frac{n_\epsilon}{1-n_\epsilon}.  \label{eq:p}
\end{align}
This system is complemented by an initial data denoted $n_\epsilon^{ini}$.
The aim of this paper is to investigate the incompressible limit of this model,
which consists in letting $\epsilon$ going to $0$ in the latter system.

At this stage, it is of great importance to observe that from \eqref{eq:n}, we may deduce an equation for the pressure by simply multiplying \eqref{eq:n} by $P'(n_\epsilon)$ and using the relation $n_\epsilon=\frac{p_\epsilon}{\epsilon+p_\epsilon}$ from \eqref{eq:p},
\begin{equation}\label{eq:p1}
\partial_t p_{\epsilon} - (\frac{p_{\epsilon}^2}{\epsilon}+p_{\epsilon})\Delta p_{\epsilon} -  |\nabla p_{\epsilon}|^2 = (\frac{p_{\epsilon}^2}{\epsilon}+p_{\epsilon}) G(p_{\epsilon}).
\end{equation}
Formally, we deduce from \eqref{eq:p1} that when $\epsilon\to 0$, we expect to have the relation
\begin{equation}\label{eq:p0}
-p_0^2 \Delta p_0 = p_0^2 G(p_0).
\end{equation}
Moreover, passing formally to the limit into \eqref{eq:p}, it appears clearly that $(1-n_0) p_0=0$.
We deduce from this relation that if we introduce the set $\Omega_0(t)=\{p_0>0\}$, then we obtain a free boundary problem of Hele-Shaw type: On $\Omega_0(t)$, we have $n_0=1$ and $-\Delta p_0 = G(p_0)$, whereas $p_0=0$ on $\RR^d\setminus \Omega_0(t)$.
Thus although the pressure law is different, we expect to recover the same free boundary Hele-Shaw model as in \cite{PQV}.

The incompressible limit of the above cell mechanical model for tumor growth with a pressure law given by $\Pi(n)=\frac{\gamma}{\gamma-1} n^{\gamma-1}$ has been investigated in \cite{PQV} and in \cite{PQTV} when taking into account active motion of cells. In \cite{PV}, the case with viscosity, where the Darcy's law is replaced by the Brinkman's law, is studied.
We mention also the recent works \cite{KP,MPQ} where the incompressible limit with more general assumptions on the initial data has been investigated.
However, in all these mentionned works the pressure law do not prevent the non-overlapping of cells. Up to our knowledge, this work is the first attempt to extend the previous result with this constraint, i.e. with a singular pressure law as given by \eqref{eq:p}.

The outline of the paper is the following. In the next section we give the statement of the main result in Theorem \ref{TH1}, which is the convergence when $\epsilon$ goes to $0$ of the mechanical model \eqref{eq:n}--\eqref{eq:p} towards the Hele-Shaw free boundary system.
The rest of the paper is devoted to the proof of this result.
First, in section \ref{sec:estim} we establish some a priori estimate allowing to obtain space compactness.
Then, section \ref{sec:tcompact} is devoted to the study of the time compactness. 
Thanks to compactness results, we can pass to the limit $\epsilon\to 0$ in system \eqref{eq:n}--\eqref{eq:p} in section \ref{sec:conv}, up to the extraction of a subsequence.
Finally the proof of the complementary relation \eqref{eq:p0} is performed in section \ref{sec:compl}.

\section{Main result}

The aim of this paper is to establish the incompressible limit $\epsilon \to 0$
of the cell mechanical model with non-overlapping constraint \eqref{eq:n}--\eqref{eq:p}.
Before stating our main result, we list the set of assumptions that we use 
on the growth fonction and on the initial data.
For the growth function, we assume
\begin{equation}\label{hypG}
  \left\{
      \begin{aligned}
         &\exists\, G_m>0, \quad \| G \|_{\infty} \leq G_m,\\
        & G' <0, \quad \mbox{ and }\  \exists \gamma>0, \quad \min_{[0,P_M]} |G'| = \gamma,\\
         &\exists\, P_M>0, \quad G(P_M)=0.
      \end{aligned}
    \right.
\end{equation}
The quantity $P_M$, for which the growth stops, is commonly called the homeostatic pressure \cite{Prost}. 
This set of assumptions on the growth function is quite similar to the one in \cite{PQV}, except for the bound on the growth term which is needed here due to the singularity in the pressure law.

For the initial data, we assume that there exists $\epsilon_0>0$ such that
for all $\epsilon\in (0,\epsilon_0)$,
\begin{equation}\label{hypini}
  \left\{
      \begin{aligned}
       & 0 \leq n^{ini}_{\epsilon}, \qquad  \ p^{ini}_{\epsilon}:= \epsilon \frac{n_\epsilon^{ini}}{\epsilon+n_\epsilon^{ini}} \leq P_M, \\
       &  \| \partial_{x_i} n^{ini}_{\epsilon} \|_{L^1(\RR^d)} \leq C, \qquad i=1,...,d,\\       & \exists \, n^{ini}_0 \in L^1_+(\RR^d), \quad \|n^{ini}_\epsilon - n^{ini}_0\|_{L^1(\RR^d)} \to 0 \mbox{ as  }\epsilon\to 0,  \\
       & \exists\, K \subset \RR^d,\  K \mbox{ compact}, \quad \forall\, \epsilon\in(0,\epsilon_0), \ \mbox{supp } n_\epsilon^{ini} \subset K.  \\
      \end{aligned}
    \right.
\end{equation}
Notice that this set of assumptions imply that $n_\epsilon^{ini}$ is uniformly bounded in $W^{1,1}(\RR^d)$.

We are now in position to state our main result.
\begin{thm}\label{TH1}
Let $T>0$, $Q_T=(0,T)\times\RR^d$. 
Let $G$ and $(n^{ini}_{\epsilon})$ satisfy assumptions \eqref{hypG} and \eqref{hypini} respectively. 
After extraction of subsequences, both the density $n_{\epsilon}$ and the pressure $p_{\epsilon}$ converge strongly in $L^1(Q_T)$ as $\epsilon \rightarrow 0$ to the limit $n_0 \in C([0,T];L^1(\RR^d))\cap BV(Q_T)$ and
$p_0 \in BV(Q_T)\cap L^2([0,T];H^1(\RR^d))$, which satisfy
\begin{align}
\label{boundn0p0}
& 0 \leq n_0 \leq 1, \quad 0 \leq p_0 \leq P_M,  \\
\label{eqn0}
& \partial_t n_0 - \Delta p_0 = n_0 G(p_0), \mbox{ in } \mathcal{D}'(Q_T),
\end{align}
and
\begin{align}
\label{eq2n0}
\partial_t n_0 - \nabla \cdot ( n_0 \nabla p_0) = n_0 G(p_0), & \quad \text{ in } \mathcal{D}'(Q_T).
\end{align}
Moreover, we have the relation
\begin{align}\label{n0p0}
(1-n_0)p_0=0,
\end{align}
and the complementary relation
\begin{align}\label{compl}
 p_0^2(\Delta p_0 + G(p_0)) =0, & \quad \text{ in } \mathcal{D}'(Q_T).
\end{align}
\end{thm}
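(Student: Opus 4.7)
The plan is to follow the roadmap sketched in the introduction: derive uniform a priori bounds, upgrade them to strong spatial and temporal compactness, extract a convergent subsequence, pass to the limit in both the conservative and non-conservative forms of the PDE, and finally establish the complementary relation. The overall strategy closely follows the Perthame--Quir\'os--V\'azquez scheme, with the singular pressure law $P(n)=\epsilon n/(1-n)$ replacing the power-law pressure.

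The a priori bounds begin with $0\leq p_\epsilon \leq P_M$. Nonnegativity of $n_\epsilon$, hence of $p_\epsilon$, follows from a standard parabolic argument on \eqref{eq:n}. The upper bound comes from a maximum-principle argument on \eqref{eq:p1}: at a hypothetical maximum of $p_\epsilon$ at value $P_M$ one has $\Delta p_\epsilon\leq 0$ and $|\nabla p_\epsilon|^2=0$, while the source $(p_\epsilon^2/\epsilon+p_\epsilon)G(p_\epsilon)$ vanishes thanks to $G(P_M)=0$, so the initial bound in \eqref{hypini} propagates. Since $n_\epsilon = p_\epsilon/(\epsilon+p_\epsilon)$, this automatically enforces $n_\epsilon\leq P_M/(P_M+\epsilon)<1$. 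A uniform spatial $BV$ estimate on $n_\epsilon$ is then obtained by differentiating \eqref{eq:n} in $x_i$, testing with $\mathrm{sgn}(\partial_{x_i}n_\epsilon)$ and integrating, using $\|G\|_\infty \leq G_m$ and the $W^{1,1}$ initial bound; a similar manipulation on \eqref{eq:p1} transports this to $p_\epsilon$. A standard energy estimate, testing \eqref{eq:n} with $p_\epsilon$, then yields the $L^2(0,T;H^1(\RR^d))$ bound on $p_\epsilon$.

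Time compactness follows by using the equation to control $\partial_t n_\epsilon$ in a negative Sobolev space: the flux $n_\epsilon\nabla p_\epsilon$ is bounded in $L^2(Q_T)$ via Cauchy--Schwarz (using $n_\epsilon\leq 1$ and $\nabla p_\epsilon\in L^2$), and the source is bounded in $L^\infty$. Combined with the spatial $BV$ bound, an Aubin--Lions / Fr\'echet--Kolmogorov argument delivers strong $L^1(Q_T)$ compactness of $(n_\epsilon)$, and the $BV$ bound on $(p_\epsilon)$ does the same for the pressure. With $n_\epsilon\to n_0$ and $p_\epsilon\to p_0$ strongly in $L^1(Q_T)$ and $\nabla p_\epsilon \rightharpoonup \nabla p_0$ weakly in $L^2(Q_T)$, the pointwise identity $(1-n_\epsilon)p_\epsilon = \epsilon n_\epsilon$ passes to the limit in $L^1$, giving $(1-n_0)p_0 = 0$. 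For the flux in \eqref{eq:n}, I would split $n_\epsilon\nabla p_\epsilon = \nabla p_\epsilon - (1-n_\epsilon)\nabla p_\epsilon$; the first term converges weakly to $\nabla p_0$ and the second to $(1-n_0)\nabla p_0$, which vanishes because $(1-n_0)p_0=0$ forces $\nabla p_0 = 0$ a.e. on $\{n_0<1\}=\{p_0=0\}$. This delivers both \eqref{eqn0} and its equivalent form \eqref{eq2n0}, with the source $n_\epsilon G(p_\epsilon)\to n_0 G(p_0)$ by strong convergence and continuity.

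The hardest step is the complementary relation \eqref{compl}. Multiplying \eqref{eq:p1} by $\epsilon$ gives $\epsilon\partial_t p_\epsilon - (p_\epsilon^2+\epsilon p_\epsilon)\Delta p_\epsilon - \epsilon|\nabla p_\epsilon|^2 = (p_\epsilon^2+\epsilon p_\epsilon)G(p_\epsilon)$, and formally every $\epsilon$-prefixed term vanishes in the limit, leaving the desired identity. To make this rigorous I would rewrite $p_\epsilon^2\Delta p_\epsilon = \tfrac13\Delta(p_\epsilon^3) - 2p_\epsilon|\nabla p_\epsilon|^2$: the Laplacian term converges in $\mathcal{D}'(Q_T)$ from strong $L^3$ convergence of $p_\epsilon$ (obtained by interpolation from strong $L^1$ convergence and the $L^\infty$ bound), while the limit of $p_\epsilon|\nabla p_\epsilon|^2$ is the genuine technical point. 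One identifies it by combining weak lower semicontinuity of the $L^2$ norm with a matching upper bound obtained by testing \eqref{eq:p1} against a smooth function of $p_\epsilon$ and passing to the limit; the key simplification is that, by $(1-n_0)p_0=0$, only the set $\{p_0>0\}$ matters, and there $n_0=1$. The remaining $\epsilon\partial_t p_\epsilon$ and $\epsilon|\nabla p_\epsilon|^2$ contributions then go to zero distributionally from the uniform $BV$ and $L^2(H^1)$ bounds, completing the identification.
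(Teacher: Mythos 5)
There are genuine gaps, and they sit exactly at the points where the singular law $P(n)=\epsilon n/(1-n)$ makes the problem harder than the power-law case. First, your a priori estimates are too quick. Testing \eqref{eq:n} with $p_\epsilon$ does \emph{not} yield the $L^2(0,T;H^1)$ bound: it produces $\iint n_\epsilon|\nabla p_\epsilon|^2$, and since $P'(n_\epsilon)=\epsilon/(1-n_\epsilon)^2\sim(p_\epsilon+\epsilon)^2/\epsilon$ blows up, you cannot upgrade this to $\iint|\nabla p_\epsilon|^2$ by elementary means; the paper has to choose $\psi$ with $n\psi'(n)=P'(n)$ and then control the resulting entropy term $\epsilon n_\epsilon|\ln p_\epsilon|$, which in turn requires the finite-speed-of-propagation lemma (uniform compact support of $p_\epsilon$) that never appears in your plan. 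Similarly, your claim that "a similar manipulation on \eqref{eq:p1} transports" the spatial $BV$ bound from $n_\epsilon$ to $p_\epsilon$ is unsubstantiated: $\nabla p_\epsilon=\frac{\epsilon}{(1-n_\epsilon)^2}\nabla n_\epsilon$ with an unbounded factor, and the pressure equation \eqref{eq:p1} carries $1/\epsilon$ coefficients, so no uniform estimate comes out of it directly. The paper instead exploits the strict monotonicity $G'\leq-\gamma<0$ in \eqref{hypG} to generate the dissipation term $\gamma\iint n_\epsilon|\partial_{x_i}p_\epsilon|$ in the differentiated $n$-equation, and only through it gets $\nabla p_\epsilon\in L^1(Q_T)$. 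A larger omission of the same kind: you have no mechanism at all for \emph{time} compactness of the pressure. Your negative-Sobolev/Aubin--Lions argument applies to $n_\epsilon$, but strong $L^1(Q_T)$ convergence of $p_\epsilon$ (asserted in the theorem and needed to pass to the limit in $G(p_\epsilon)$, $p_\epsilon^2$, $(1-n_\epsilon)p_\epsilon$) does not follow from strong convergence of $n_\epsilon$, because the constitutive law degenerates as $\epsilon\to0$ (where $n_0=1$ the map $n\mapsto P(n)$ carries no information). This is precisely why the paper proves the Aronson--B\'enilan/Crandall--Pierre regularizing effect $\partial_t p_\epsilon\geq-\kappa p_\epsilon/t$, $\partial_t n_\epsilon\geq-\kappa n_\epsilon/t$, which also underlies the $C((0,T];L^1)$ continuity and the initial-trace identification that your proposal skips. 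Finally, global (not merely local) $L^1(Q_T)$ convergence requires tail control, which the paper gets from the compactly supported initial data and the equation; this too is missing.

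Second, the complementary relation is only half-argued. Your decomposition $p_\epsilon^2\Delta p_\epsilon=\tfrac13\Delta(p_\epsilon^3)-2p_\epsilon|\nabla p_\epsilon|^2$ plus weak lower semicontinuity of $\iint\phi\,|\nabla p_\epsilon^{3/2}|^2$ gives one inequality in the weak form of \eqref{compl} --- this is essentially the paper's first step. But the reverse inequality, i.e.\ ruling out a loss of gradient energy in the limit of $p_\epsilon|\nabla p_\epsilon|^2$, is the crux, and "testing \eqref{eq:p1} against a smooth function of $p_\epsilon$ and passing to the limit" is not an argument: any such test reintroduces the $1/\epsilon$-weighted terms and requires exactly the compactness you have not established. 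The paper's proof of this step uses a Steklov-type time regularization of \eqref{eq:nH}, multiplication by $P'(n_\epsilon)$, and, crucially, the one-sided bounds $\partial_t n_\epsilon\geq-C/t$ and $\partial_t(p_\epsilon+\epsilon)^2\geq-C'/t$ from the regularizing effect to show the limiting defect has a sign. Without that (or an equivalent strong-convergence statement for $\nabla p_\epsilon$), your identification of $\lim p_\epsilon|\nabla p_\epsilon|^2$ does not go through, and \eqref{compl} remains unproved.
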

This result extends the one in \cite{PQV} to singular pressure laws with non-overlapping constraint. We notice that we recover the same limit model whose uniqueness has already been stated in \cite[Theorem 2.4]{PQV}.

Although our proof follows the idea in \cite{PQV}, several technical difficulties must be overcome due to the singularity of the pressure law. Indeed, we first recall that with the choice $\Pi(n)=\frac{\gamma}{\gamma-1} n^{\gamma-1}$, equation \eqref{eq:n} may be rewritten as the porous medium equation $\pa_t n + \Delta n^\gamma = n G(\Pi(n))$. A lot of estimates are known and well established for this equation (see \cite{Vazquez}), in particular a semiconvexity estimate is used in \cite{PQV} which allows to obtain estimate on the time derivative and thus compactness. 
With our choice of pressure law, \eqref{eq:n} should be consider as a fast diffusion equation. Thus we have first to state a comparison principle to obtain a priori estimates (see Lemma \ref{lem:estim}). Unlike in \cite{PQV}, we may not use a semiconvexity estimate to obtain estimate on the time derivative. To do so, we use regularizing effects (see section \ref{sec:tcompact}). Then the convergence proof has to be adapted for these new estimates.

\begin{figure}
\includegraphics[width=0.49\linewidth]{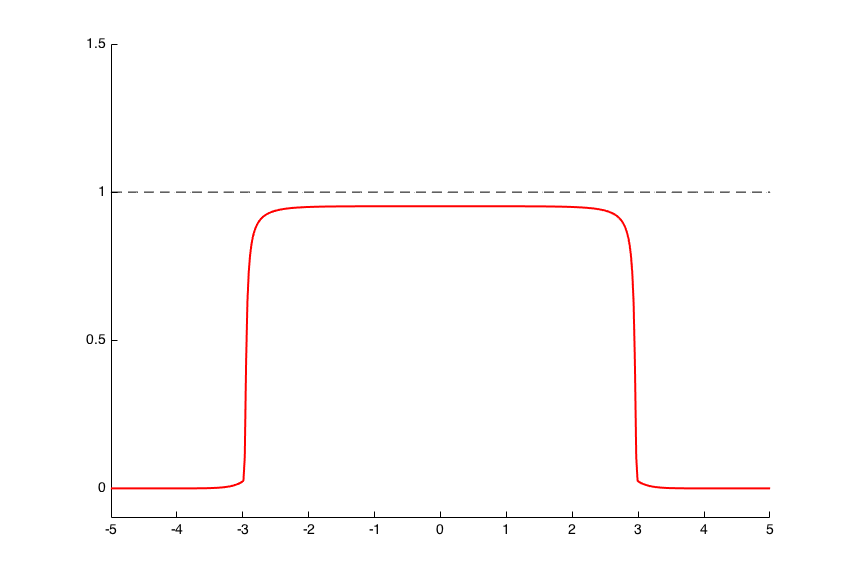}
\includegraphics[width=0.49\linewidth]{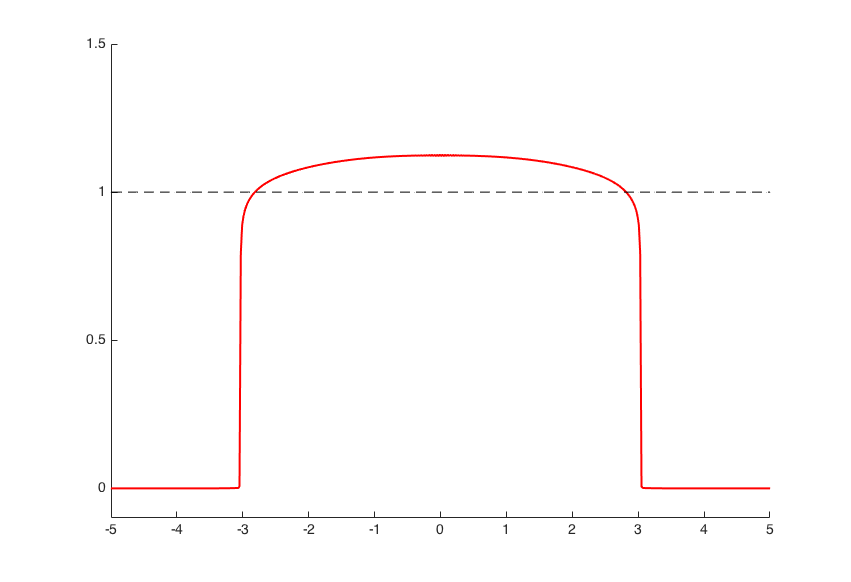}
\caption{Comparison between numerical solutions computed with two different pressure laws. The red line correspond to the cell density $n$ solving \eqref{eq:n}, the dashed line correspond to the constant value $1$. On the left, the pressure law is $p=P(n) = 0.5 \frac{n}{1-n}$. On the right, the pressure law is $p=\Pi(n)=\frac{\gamma}{\gamma-1} n^\gamma$ with $\gamma=20$.}
\label{fig1}
\end{figure}

Finally, we illustrate the comparison between the two pressure laws $P$ and $\Pi$ by a numerical simulation. We display in Figure \ref{fig1} the density computed thanks to a discretization with an upwind scheme of \eqref{eq:n}. In Figure \ref{fig1}-left, the pressure law is $p=P(n)=\epsilon\frac{n}{1-n}$ as in \eqref{eq:p} with $\epsilon=0.5$. In Figure \ref{fig1}-right, the pressure law is $p=\Pi(n)=\frac{\gamma}{\gamma-1} n^\gamma$ with $\gamma=20$.
We take $G(p)= 10(10-p)_+$ as growth function (which satisfies obviously assumption \eqref{hypG} with $P_M=10$).
The dashed lines in these plots correspond to the constant value $1$.
As expected, we observe that the density $n$ is bounded by $1$ in the case of the pressure law $P$ whereas it takes values greater than $1$ for the pressure law $\Pi$. This observation illustrates the fact that the choice of the pressure law $\Pi$ does not prevent from overlapping.

\section{A priori estimates}\label{sec:estim}
\subsection{Nonnegativity principle}

The following Lemma establishes the nonnegativity of the density.
\begin{lem}\label{lem:nonneg}
Let $(n_\epsilon,p_\epsilon)$ be a solution to \eqref{eq:n} such that $n_\epsilon^{ini}\geq 0$ 
and $\|G\|_\infty \leq G_m <\infty$.
Then, for all $t\geq 0$, $n_\epsilon(t)\geq 0$.
\end{lem}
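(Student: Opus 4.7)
The plan is to estimate the $L^1$ mass of the negative part $n_\epsilon^-:=\max(-n_\epsilon,0)$ and conclude by Gronwall's lemma, using that the hypothesis $n_\epsilon^{ini}\geq 0$ means $(n_\epsilon^{ini})^-\equiv 0$. A naive test by $-\mathbf{1}_{\{n_\epsilon<0\}}$ is not immediately successful: rewriting \eqref{eq:n} as $\partial_t n_\epsilon=\Delta H(n_\epsilon)+n_\epsilon G(p_\epsilon)$ with $H'(n)=nP'(n)=\epsilon n/(1-n)^2$, one sees that $H'<0$ on $\{n<0\}$, so the diffusion is anti-dissipative there and produces the wrong sign. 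The key fact that saves us is the degeneracy $H'(n)=O(n)$ as $n\to 0$, which will let us absorb the bad contribution in the limit of a regularization.

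Concretely, I would introduce the convex cutoff $\phi_\delta\in C^1(\RR)$ equal to $0$ on $[0,\infty)$, to $s^2/(2\delta)$ on $[-\delta,0]$, and to $-s-\delta/2$ on $(-\infty,-\delta]$. Then $\phi_\delta(n)\to n^-$ pointwise with $\phi_\delta(n)\leq n^-$, $\phi_\delta'(n)n\to n^-$ pointwise with $|\phi_\delta'(n)n|\leq n^-$, and $\phi_\delta''=\tfrac{1}{\delta}\mathbf{1}_{[-\delta,0]}$. Testing \eqref{eq:n} against $\phi_\delta'(n_\epsilon)$, integrating over $\RR^d$, and using $\nabla p_\epsilon=P'(n_\epsilon)\nabla n_\epsilon$, a single integration by parts yields
\begin{equation*}
\frac{d}{dt}\int_{\RR^d}\phi_\delta(n_\epsilon)\,dx = -\int_{\RR^d}\phi_\delta''(n_\epsilon)\,n_\epsilon P'(n_\epsilon)\,|\nabla n_\epsilon|^2\,dx + \int_{\RR^d}\phi_\delta'(n_\epsilon)\,n_\epsilon G(p_\epsilon)\,dx.
\end{equation*}

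The delicate step is the first term on the right: its sign is non-negative (since $n_\epsilon P'(n_\epsilon)\leq 0$ on the support $[-\delta,0]$ of $\phi_\delta''$), so it cannot simply be discarded as a dissipation. However, on this same support $|n_\epsilon P'(n_\epsilon)|\leq C\epsilon\delta$ whereas $\phi_\delta''=1/\delta$, so the term is bounded in absolute value by $C\epsilon\int_{\{-\delta<n_\epsilon<0\}}|\nabla n_\epsilon|^2\,dx$. Because $\nabla n_\epsilon=0$ almost everywhere on $\{n_\epsilon=0\}$ (a classical Stampacchia property of Sobolev functions), dominated convergence forces this quantity to vanish as $\delta\to 0$. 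Passing to the limit $\delta\to 0$ in the time-integrated identity, using $\|G\|_\infty\leq G_m$ and dominated convergence on the reaction term (dominated by $G_m n_\epsilon^-\in L^1$), yields
\begin{equation*}
\int_{\RR^d}n_\epsilon^-(t,x)\,dx \leq G_m\int_0^t\int_{\RR^d}n_\epsilon^-(s,x)\,dx\,ds,
\end{equation*}
and Gronwall's lemma gives $n_\epsilon^-\equiv 0$, i.e.\ $n_\epsilon(t,\cdot)\geq 0$ for every $t\geq 0$. The only real obstacle is the non-dissipative character of the diffusion on $\{n_\epsilon<0\}$; this is circumvented by exploiting that $nP'(n)$ vanishes at $n=0$ at exactly the rate needed to offset the $1/\delta$ weight of $\phi_\delta''$.
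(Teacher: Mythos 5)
Your proof is correct and rests on the same basic strategy as the paper's --- an $L^1$ estimate of the negative part followed by Gronwall --- but the execution of the truncation step is genuinely different. The paper performs the formal Stampacchia step directly: it multiplies \eqref{eq:n} by $\mathbf{1}_{\{n_{\epsilon}<0\}}$ while keeping the advective (divergence) structure, obtaining $\partial_t |n_{\epsilon}|_- - \nabla\cdot(|n_{\epsilon}|_-\nabla p_{\epsilon}) = |n_{\epsilon}|_- G(p_{\epsilon})$, so that after integration in space the diffusion contribution disappears entirely and the sign issue you worry about never surfaces. You instead work with the fast-diffusion form $\partial_t n_{\epsilon} = \Delta H(n_{\epsilon}) + n_{\epsilon}G(p_{\epsilon})$, where indeed $H'(n)=nP'(n)<0$ for $n<0$, so the naive Kato/convexity argument fails; you repair this with the regularized convex truncation $\phi_\delta$, and the resulting wrong-sign term $\int \phi_\delta''(n_{\epsilon})\, n_{\epsilon}P'(n_{\epsilon})|\nabla n_{\epsilon}|^2\,dx$ is controlled by $\epsilon \int_{\{-\delta\le n_{\epsilon}\le 0\}} |\nabla n_{\epsilon}|^2\,dx$ thanks to $|nP'(n)|\le \epsilon\delta$ on the support of $\phi_\delta''$, hence vanishes as $\delta\to 0$. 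In effect your argument is a rigorous version of the paper's one-line computation: the singular term that the formal multiplication by an indicator ignores is exactly your remainder, and you show it is harmless precisely because $nP'(n)$ degenerates linearly at $n=0$. The price of your route is a mild implicit regularity requirement --- the dominated-convergence step needs $|\nabla n_{\epsilon}|^2$ integrable on $\{-\delta_0\le n_{\epsilon}\le 0\}$ (equivalently $\nabla p_{\epsilon}\in L^2_{loc}$ there, since $P'(n)\sim\epsilon$ on that set), and, like the paper, you discard boundary terms at spatial infinity without comment; what it buys is a justification of the truncation, and of why the anti-dissipative character of the diffusion on $\{n_{\epsilon}<0\}$ does not spoil the conclusion, which the paper's formal computation does not supply.
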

\begin{proof}
We have the equation
$$\partial_t n_{\epsilon} - \nabla \cdot (n_{\epsilon} \nabla p_{\epsilon}) = n_{\epsilon} G(p_{\epsilon}). $$
We use the Stampaccchia method. We multiply by $\mathbf{1}_{n_{\epsilon}<0}$, then
using the notation $|n|_- = \max(0,-n)$ for the negative part, we get
$$\frac{d}{dt} |n_{\epsilon}|_{-} -\nabla \cdot (|n_{\epsilon}|_{-} \nabla p_{\epsilon}) = |n_{\epsilon}|_{-} G(p_{\epsilon}). $$
We integrate in space, using assumption \eqref{hypG}, we deduce
$$\frac{d}{dt} \int_{\RR^d} |n_{\epsilon}|_{-}dx \leq \int_{\RR^d} |n_{\epsilon}|_{-} G(p_{\epsilon})dx \leq G_m\int_{\RR^d} |n_{\epsilon}|_{-}dx. $$
So, after a time integration
$$  \int_{\RR^d} |n_{\epsilon}|_{-}\,dx \leq e^{G_mt}  \int_{\RR^d} |n_{\epsilon}^{ini}|_{-}\,dx. $$
With the initial condition $n_{\epsilon}^{ini} \geq 0$, we deduce $n_{\epsilon} \geq 0$.
\end{proof}

\subsection{A priori estimates}

In order to use compactness results, we need first to find a priori estimates on the pressure and the density.
We first observe that we may rewrite system \eqref{eq:n} as, by using \eqref{eq:p},
\begin{equation}\label{eq:nH}
\partial_t n_{\epsilon} -\Delta H(n_{\epsilon})= n_{\epsilon} G(P(n_{\epsilon})),
\end{equation}
with  $H(n)=\int_0^{n} u P'(u)du = P(n)-\epsilon \ln (P(n)+\epsilon) + \epsilon \ln \epsilon$.

\begin{lem}\label{lem:estim}
Let us assume that \eqref{hypG} and \eqref{hypini} hold. 
Let $(n_\epsilon,p_\epsilon)$ be a solution to \eqref{eq:nH}--\eqref{eq:p}.
Then, for all $T>0$, 
we have the uniform bounds in $\epsilon\in(0,\epsilon_0)$,
\begin{align*}
& 0 \leq n_{\epsilon} \in L^\infty([0,T];L^1\cap L^\infty(\RR^d)); \\
& 0\leq p_\epsilon \leq P_M, \qquad 0\leq n_\epsilon \leq \frac{P_M}{P_M+\epsilon} \leq 1.
\end{align*}

More generally, we have the {\bf comparison principle}:  
If $n_\epsilon$, $m_\epsilon$  are respectively subsolution and supersolution to \eqref{eq:nH},
with initial data $n_\epsilon^{ini}$, $m_\epsilon^{ini}$ as in \eqref{hypini} and satisfying $n_\epsilon^{ini}\leq m_\epsilon^{ini}$.
Then for all $t>0$,  $n_\epsilon(t) \leq m_\epsilon(t)$.

Finally, we have that $(n_\epsilon)_\epsilon$ is uniformly bounded in 
$L^\infty([0,T],W^{1,1}(\RR^d))$ and $(p_\epsilon)_\epsilon$ is uniformly bounded in
$L^1([0,T],W^{1,1}(\RR^d))$.

\end{lem}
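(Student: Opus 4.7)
The plan is to combine a Stampacchia--Kato computation, a one-sided Lipschitz bound on the source, and a weighted energy identity, thereby bypassing the fact that the pressure law is singular when $n\to 1$.

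\emph{Comparison principle and $L^\infty$ bounds.} Set $f(n) := n G(P(n))$; writing $p = P(n)$ one computes $f'(n) = G(p) + n G'(p) P'(n)$, and since $G'\leq 0$ while $n,P'(n)\geq 0$, one has the one-sided estimate $f'(n)\leq G_m$ \emph{uniformly in $\epsilon$}. Given a subsolution $n_\epsilon$ and a supersolution $m_\epsilon$ of \eqref{eq:nH} with $n_\epsilon^{ini}\leq m_\epsilon^{ini}$, I would subtract the two inequalities and test against a smooth approximation of $\mathrm{sign}_+(H(n_\epsilon)-H(m_\epsilon))$. Since $H$ is strictly increasing, this selector agrees with $\mathrm{sign}_+(n_\epsilon - m_\epsilon)$ in the limit; Kato's inequality makes the diffusive contribution non-positive, and the one-sided bound on $f$ controls the source, so that $\frac{d}{dt}\int (n_\epsilon - m_\epsilon)_+\,dx \leq G_m \int (n_\epsilon - m_\epsilon)_+\,dx$. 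Gronwall then yields the comparison principle. The $L^\infty$ bound $p_\epsilon\leq P_M$ follows by applying comparison with the stationary supersolution $m \equiv P_M/(P_M+\epsilon)$, which supersolves \eqref{eq:nH} because $G(P_M)=0$ and which dominates $n_\epsilon^{ini}$ by \eqref{hypini}. Nonnegativity is Lemma~\ref{lem:nonneg}, and the $L^\infty L^1$ bound on $n_\epsilon$ comes from integrating \eqref{eq:nH} in space and applying Gronwall with constant $G_m$.

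\emph{$W^{1,1}$ estimate on $n_\epsilon$.} I would exploit the translation invariance of \eqref{eq:nH}: for any shift $h\in\RR^d$, both $n_\epsilon(t,\cdot+h)$ and $n_\epsilon(t,\cdot)$ solve the same equation, so the Stampacchia computation above applied to their difference gives the $L^1$-contraction
\begin{equation*}
\int |n_\epsilon(t,\cdot+h) - n_\epsilon(t,\cdot)|\,dx \leq e^{G_m t}\int |n_\epsilon^{ini}(\cdot+h) - n_\epsilon^{ini}(\cdot)|\,dx.
\end{equation*}
Dividing by $|h|$ and letting $h\to 0$ along each coordinate yields $\|\partial_i n_\epsilon(t)\|_{L^1} \leq e^{G_m t}\|\partial_i n_\epsilon^{ini}\|_{L^1}$, uniform in $\epsilon$ by \eqref{hypini}. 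The essential point is that only the one-sided bound $f'\leq G_m$ is used; although $|f'|$ itself diverges as $\epsilon\to 0$, this plays no role in the argument.

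\emph{$L^1_t W^{1,1}_x$ estimate on $p_\epsilon$.} Multiplying \eqref{eq:n} by $p_\epsilon$ and integrating by parts yields the weighted energy identity
\begin{equation*}
\frac{d}{dt}\int \mathcal{F}(n_\epsilon)\,dx + \int n_\epsilon|\nabla p_\epsilon|^2\,dx = \int p_\epsilon n_\epsilon G(p_\epsilon)\,dx,
\end{equation*}
with $\mathcal{F}(n) := \int_0^n P(s)\,ds = -\epsilon n - \epsilon\ln(1-n)$. Since $n_\epsilon \leq P_M/(P_M+\epsilon)$, the primitive $\mathcal{F}(n_\epsilon)$ is uniformly $O(\epsilon\ln(1/\epsilon))$ on the support of $n_\epsilon$; integrating in time then gives $\int_0^T\!\!\int n_\epsilon|\nabla p_\epsilon|^2\,dx\,dt \leq C$. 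I would then split $\int|\nabla p_\epsilon|\,dx = \int_{n_\epsilon\geq 1/2} + \int_{n_\epsilon < 1/2}$. On $\{n_\epsilon\geq 1/2\}$, Cauchy--Schwarz combined with Markov's inequality $|\{n_\epsilon\geq 1/2\}| \leq 2\|n_\epsilon\|_{L^1}$ gives a bound by the square root of the energy. On $\{n_\epsilon < 1/2\}$, $P'(n_\epsilon) = \epsilon/(1-n_\epsilon)^2 \leq 4\epsilon$, so $|\nabla p_\epsilon|\leq 4\epsilon|\nabla n_\epsilon|$ and this part contributes at most $4\epsilon T\|n_\epsilon\|_{L^\infty_t W^{1,1}_x}$, which is $O(\epsilon)$.

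The main obstacle is exactly the singularity of $P$: naively differentiating \eqref{eq:n} in $x_i$ produces a coefficient $n_\epsilon P'(n_\epsilon) = p_\epsilon(p_\epsilon+\epsilon)/\epsilon$ that blows up as $\epsilon\to 0$, preventing a direct gradient estimate. The key observation is that the sign of $G'$ makes this bad term one-signed, reducing the required bound on $f$ to a one-sided Lipschitz estimate uniform in $\epsilon$; and for the pressure, splitting at $n_\epsilon = 1/2$ transfers the singularity of $P'$ into a smallness factor $\epsilon$ on the thin region, while leaving a standard weighted energy estimate on the bulk.
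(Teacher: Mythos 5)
Your argument is correct, and while the first half (comparison principle via Kato's inequality with the one-sided bound on $n\mapsto nG(P(n))$, the constant supersolution $P_M/(P_M+\epsilon)$, and the $L^1$ Gronwall bound) coincides with the paper's proof, the gradient estimates follow a genuinely different route. For $\nabla n_\epsilon$ the paper differentiates \eqref{eq:nH} in $x_i$ and multiplies by $\mathrm{sign}(\partial_{x_i}n_\epsilon)$, whereas you use translation invariance plus the $L^1$-contraction between the solution and its translate; the two give the same bound $\|\partial_{x_i}n_\epsilon(t)\|_{L^1}\leq e^{G_mt}\|\partial_{x_i}n_\epsilon^{ini}\|_{L^1}$, but the paper's differentiated equation produces, as a by-product of the hypothesis $G'\leq-\gamma<0$ in \eqref{hypG}, the dissipation term $\gamma\int_0^t\int n_\epsilon|\partial_{x_i}p_\epsilon|\,dx\,ds$, and it is exactly this term (combined with the same splitting at $n_\epsilon=1/2$) that the paper uses to bound $\nabla p_\epsilon$ in $L^1(Q_T)$. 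You replace it by the weighted energy identity $\frac{d}{dt}\int\mathcal{F}(n_\epsilon)+\int n_\epsilon|\nabla p_\epsilon|^2=\int p_\epsilon n_\epsilon G(p_\epsilon)$ followed by Cauchy--Schwarz and Chebyshev on $\{n_\epsilon\geq1/2\}$; this is a lighter cousin of the paper's Lemma \ref{lem:L2dp} (weighted by $n_\epsilon$, so the awkward $\epsilon n_\epsilon\ln(p_\epsilon/\epsilon)$ terms and the compact-support input of Lemma \ref{lem:supp} never appear), and it has the interesting feature of not using $\min|G'|=\gamma>0$ at all for this lemma, only $G'\leq0$ and $\|G\|_\infty\leq G_m$. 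Two small points to tidy up: (i) do not invoke ``the support of $n_\epsilon$'' to bound $\int\mathcal{F}(n_\epsilon)\,dx$ --- compact support of $n_\epsilon$ is only established later (Lemma \ref{lem:supp}, which itself uses the comparison principle); instead use $\mathcal{F}(n)\leq nP(n)\leq P_M\,n$ (or convexity of $\mathcal{F}$), so that $\int\mathcal{F}(n_\epsilon^{ini})\,dx\leq P_M\|n_\epsilon^{ini}\|_{L^1}$ and the right-hand side is bounded by $G_mP_M\|n_\epsilon\|_{L^\infty_tL^1_x}$, with no support information needed; (ii) to complete the $L^1([0,T];W^{1,1})$ claim for $p_\epsilon$, record also $\|p_\epsilon\|_{L^1(\RR^d)}\leq(\epsilon+P_M)\|n_\epsilon\|_{L^1(\RR^d)}$, which follows from \eqref{eq:p} as in the paper.
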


\begin{proof}
{\bf Comparison principle.}

Let $n_{\epsilon}$ be a subsolution and $m_{\epsilon}$ a supersolution of \eqref{eq:nH}, we have 
$$
\partial_t (n_{\epsilon} -m_{\epsilon})-\Delta (H(n_{\epsilon})-H(m_{\epsilon})) \leq n_{\epsilon} G(P(n_{\epsilon})) -m_{\epsilon}G(P(m_{\epsilon})).
$$
Notice that, since the function $H$ is nondecreasing, the sign of $n_{\epsilon}-m_{\epsilon}$ is the same as the sign of $H(n_{\epsilon})-H(m_{\epsilon})$. Moreover,
$$
\Delta f(y) = f''(y) |\nabla y|^2 +f'(y) \Delta y, 
$$
so for $y=H(n_{\epsilon})-H(m_{\epsilon})$ and $f(y)=y_+$ is the positive part, the so-called Kato inequality reads
$\Delta f(y) \geq f'(y) \Delta y$.
Thus multiplying the latter equation by $\mathbf{1}_{n_\epsilon-m_\epsilon>0}$, we obtain
\begin{align*}
\partial_t |n_{\epsilon} -m_{\epsilon}|_+-\Delta |H(n_{\epsilon})-H(m_{\epsilon})|_+ \leq & 
|n_{\epsilon}-m_\epsilon|_+ G(P(n_{\epsilon}))   \\
& + m_{\epsilon}(G(P(n_{\epsilon}))-G(P(m_{\epsilon}))) \mathbf{1}_{n_\epsilon-m_\epsilon>0}.
\end{align*}
From assumption \eqref{hypG}, we have that $G$ is nonincreasing. Thus, since $n\mapsto P(n)$ is increasing, 
we deduce that the last term of the right hand side is nonpositive.
Since $G$ is uniformly bounded we obtain
$$ 
\partial_t |n_{\epsilon} -m_{\epsilon}|_{+} - \Delta |H(n_{\epsilon})-H(m_{\epsilon})|_{+} \leq G_m|n_{\epsilon}-m_{\epsilon}|_{+}.
$$
After an integration over $\RR^d$,
$$
\frac{d}{dt} \int_{\RR^d} |n_{\epsilon} -m_{\epsilon}|_{+}\,dx \leq G_m \int_{\RR^d} |n_{\epsilon}-m_{\epsilon}|_{+}\,dx.
$$
Then, integrating in time, we deduce
$$
\int_{\RR^d} |n_{\epsilon} -m_{\epsilon}|_{+}\,dx \leq e^{G_m t} \int_{\RR^d} |n^{ini}_{\epsilon} -m^{ini}_{\epsilon}|_{+}\,dx.
$$
Since we have $n^{ini}_{\epsilon}\leq m^{ini}_\epsilon$, we deduce that
for all $t>0$, $|n_\epsilon-m_\epsilon|_+(t) =0$.

{\bf $L^{\infty}$ bounds.}

We define $n_M= \frac{P_M}{\epsilon+P_M}$, such that $p_M=P(n_M)$, then applying the comparison principle with
$m_\epsilon = n_M$, we deduce, using also the assumption
on the initial data \eqref{hypini} that for all $0<\epsilon\leq \epsilon_0$,
$n_{\epsilon}\leq n_M.$ Moreover, since $0$ is clearly a subsolution to \eqref{eq:nH}, we also have 
by the comparison priniciple $n_\epsilon\geq 0$.
Since $n_M\leq 1$, we have $0\leq n_{\epsilon}\leq n_M \leq 1$ which implies
$$ 0 \leq p_{\epsilon} \leq P_M.$$

{\bf $L^1$ bound of $n, p$.}

By nonnegativity, after a simple integration in space of equation \eqref{eq:n}, we deduce
\begin{equation}\label{eqnL1}
\frac{d}{dt} \| n_\epsilon\|_{L^1(\RR^d)} \leq G_m \| n_\epsilon\|_{L^1(\RR^d)},
\end{equation}
where we use \eqref{hypG}. Integrating in time give the $L^1$ bound,
$$
\|n_{\epsilon}\|_{L^1(\RR^d)} \leq e^{G_mt} \|n^{ini}_{\epsilon}\|_{L^1(\RR^d)}. 
$$
Then, using $p_\epsilon= n_\epsilon (\epsilon+p_\epsilon)$ by \eqref{eq:p}, we get from the bound $p_\epsilon\leq P_M$, which has been proved above,
$$
\|p_{\epsilon}\|_{L^1(R^d)} \leq (\epsilon+P_M)\int_{\RR^d} |n_{\epsilon}|dx \leq Ce^{G_m t} \|n^{ini}_{\epsilon}\|_{L^1(\RR^d)}.
$$

{\bf Estimates on the $x$ derivative.}

We derive equation \eqref{eq:nH} with respect to $x_i$ for $i=1,\ldots,d$,
$$ \partial_t \partial_{x_i} n_{\epsilon} -\Delta (H'(n_{\epsilon})\partial_{x_i} n_{\epsilon})= \partial_{x_i} n_{\epsilon} G(p_{\epsilon}) +n_{\epsilon} G'(p_{\epsilon}) \partial_{x_i} p_{\epsilon}.$$
Multiplying by sign$(\partial_{x_i} n_\epsilon)$, we get
$$ \partial_t |\partial_{x_i} n_{\epsilon}| -\Delta (\partial_{x_i}H(n_{\epsilon}))\mbox{sign}(\partial_{x_i} n_{\epsilon}) = |\partial_{x_i} n_{\epsilon}| G(p_{\epsilon}) +n_{\epsilon} G'(p_{\epsilon}) \partial_{x_i} p_{\epsilon} \mbox{sign}(\partial_{x_i} n_{\epsilon}).$$
We can remark that $\mbox{sign}(\partial_{x_i} n_{\epsilon})= \mbox{sign}(\partial_{x_i} H(n_{\epsilon}))$, so, by the same token as above, we have
$$\Delta(\partial_{x_i}H(n_{\epsilon}))\mbox{sign}(\partial_{x_i} n_{\epsilon}) \geq \Delta (|\partial_{x_i}H(n_{\epsilon})|).$$
Moreover, $\mbox{sign}(\partial_{x_i} n_{\epsilon})=\mbox{sign}(\partial_{x_i} p_{\epsilon})$, thus 
$\partial_{x_i} p_{\epsilon} \mbox{sign}(\partial_{x_i} n_{\epsilon}) = |\partial_{x_i} p_\epsilon|$. 
By assumption \eqref{hypG}, we know that
$$ G'(p_{\epsilon}) \leq - \gamma<0, $$
we deduce
$$ \partial_t |\partial_{x_i} n_{\epsilon}| -\Delta (|\partial_{x_i}H(n_{\epsilon})|) \leq  |\partial_{x_i} n_{\epsilon}| G_{m} - \gamma n_{\epsilon} |\partial_{x_i} p_{\epsilon}|.$$
After an integration in time and space,
\begin{equation}\label{estimdxn}
\|\partial_{x_i} n_{\epsilon}\|_{L^1(\RR^d)} + \gamma \int_0^t \int_{\RR^d} n_{\epsilon} |\partial_{x_i} p_{\epsilon}|\,dxds \leq e^{G_m t} \| \partial_{x_i} n_{\epsilon}^{ini}\|_{L^1(\RR^d)}.
\end{equation}

This latter inequality provides us with a uniform bound on the space derivative of $n_\epsilon$ in $L^1$. Then
$$ 
\| \partial_{x_i} p_{\epsilon}\|_{L^1(\RR^d)} 
= \int_{\RR^d} |\partial_{x_i} p_{\epsilon}|dx = \int_{\RR^d} \frac{\epsilon}{(1-n_{\epsilon})^2}|\partial_{x_i} n_{\epsilon}|dx.
$$
We split the integral in two: Either $n_{\epsilon}\leq 1/2$ and then $\frac{\epsilon}{(1-n_{\epsilon})^2} \leq C$; or $n_{\epsilon}> 1/2$.
\begin{align*} 
\| \partial_{x_i} p_{\epsilon}\|_{L^1(\RR^d)}  &\leq  C \int_{n_{\epsilon}\leq 1/2} |\partial_{x_i} n_{\epsilon}|dx + \int_{n_{\epsilon}> 1/2} |\partial_{x_i} p_{\epsilon}|dx \\
  &\leq  C \int_{n_{\epsilon}\leq 1/2} |\partial_{x_i} n_{\epsilon}|dx + 2 \int_{n_{\epsilon}> 1/2} \frac 12|\partial_{x_i} p_{\epsilon}|dx \\
   &\leq  C e^{G_m t} \int_{n_{\epsilon}\leq 1/2} |\partial_{x_i} n_{\epsilon}^{ini}|dx + 2 \int_{n_{\epsilon}>1/2} n_\epsilon |\partial_{x_i} p_{\epsilon}|dx,
\end{align*}
where we have used the estimate \eqref{estimdxn} for the last inequality.
Then, integrating in time, we deduce, using again the estimate \eqref{estimdxn}
$$
\| \partial_{x_i} p_{\epsilon}\|_{L^1(Q_T)} \leq C' e^{G_m t} \| \partial_{x_i} n_{\epsilon}^{ini}\|_{L^1(\RR^d)}.
$$
It concludes the proof.
\end{proof}

\subsection{Compact support}

The following Lemma proves that assuming that the initial data is compactly supported, then the pressure is compactly supported for any time with a control of the growth of the support.
\begin{lem}[Finite speed of propagation]\label{lem:supp}
Under the same assumptions as in Theorem \ref{TH1}, we have that $\mbox{supp }p_\epsilon \subset B(0,R(t))$ with 
$R(t)\leq 2 \sqrt{C(T+t)}$, where $B(0,R(t))$ is the ball of center $0$ and radius $R(t)$.
\end{lem}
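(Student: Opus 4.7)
The natural approach is to combine the comparison principle from Lemma \ref{lem:estim} with an explicit radially symmetric barrier of compact support. Since $\mathrm{supp}\, p_\epsilon = \mathrm{supp}\, n_\epsilon$ via \eqref{eq:p}, it suffices to exhibit a supersolution $\bar p_\epsilon$ of the pressure equation \eqref{eq:p1}; the associated density $\bar n_\epsilon = \bar p_\epsilon/(\epsilon+\bar p_\epsilon)$ is then a supersolution of \eqref{eq:nH}, and Lemma \ref{lem:estim} yields $n_\epsilon(t,\cdot)\leq\bar n_\epsilon(t,\cdot)$ as soon as the same holds at $t=0$.

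I would try the Barenblatt-type ansatz
\[
\bar p_\epsilon(t,x) = A\bigl(R(t)^2 - |x|^2\bigr)_+,
\]
with a constant $A>0$ and an increasing radius $R(t)$ to be chosen. The initial comparison $\bar p_\epsilon(0,\cdot)\geq p_\epsilon^{ini}$ is secured by taking $R(0)$ large enough that $K\subset B(0,R(0))$ and $AR(0)^2 \geq P_M$, using the uniform bound $p_\epsilon^{ini}\leq P_M$ from \eqref{hypini}. Inside $\{|x|<R(t)\}$ one has $\Delta\bar p_\epsilon = -2Ad$ and $|\nabla\bar p_\epsilon|^2 = 4A^2|x|^2$, so the supersolution form of \eqref{eq:p1} becomes
\[
\partial_t \bar p_\epsilon + \bigl(2Ad - G(\bar p_\epsilon)\bigr)\Bigl(\bar p_\epsilon + \frac{\bar p_\epsilon^2}{\epsilon}\Bigr) \geq 4A^2|x|^2.
\]

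The main obstacle is the singular term $\bar p_\epsilon^2/\epsilon$, which is not uniformly bounded as $\epsilon\to 0$, so any control over it must be structural rather than perturbative. The key observation is that choosing $A = G_m/(2d)$ forces $2Ad - G(\bar p_\epsilon)\geq G_m - G(\bar p_\epsilon)\geq 0$ by \eqref{hypG}, and the whole singular contribution is then automatically nonnegative and can simply be dropped from the left-hand side. It remains to arrange $\partial_t \bar p_\epsilon \geq 4A^2|x|^2$ on the support, and since $|x|^2\leq R(t)^2$ this reduces to the scalar ODE $(R^2)'(t)\geq 4AR(t)^2$, solved for instance by $R(t)^2 = R(0)^2\,e^{4At}$.

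With this choice, Lemma \ref{lem:estim} yields $\mathrm{supp}\, p_\epsilon(t,\cdot) \subset B(0,R(t))$ with $R(t) = R(0)\,e^{G_m t/d}$, independently of $\epsilon$. On any bounded interval $[0,T]$ this exponential envelope is majorized by an expression of the form $2\sqrt{C(T+t)}$ for a constant $C$ depending only on $T$, $R(0)$, $G_m$ and $d$, which gives the claimed bound. A minor technical point is the application of the comparison principle at the free boundary of $\bar p_\epsilon$, where the barrier is only Lipschitz; this is standard and is handled either by a mild regularization of the ansatz or by reading the argument of Lemma \ref{lem:estim} in the distributional sense natural to \eqref{eq:nH}.
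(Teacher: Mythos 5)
Your argument is correct and rests on the same mechanism as the paper's proof: an explicit compactly supported supersolution of the pressure equation \eqref{eq:p1}, converted through $\bar n_\epsilon=\bar p_\epsilon/(\epsilon+\bar p_\epsilon)$ (i.e.\ multiplying the pressure inequality by $N'(\bar p_\epsilon)>0$) into a supersolution of \eqref{eq:nH}, to which the comparison principle of Lemma \ref{lem:estim} is applied; in both proofs the singular factor $\bar p_\epsilon^2/\epsilon$ is harmless because it appears multiplied by a coefficient of favourable sign. The difference is the barrier. The paper takes the self-similar profile $\tilde p(t,x)=\big(C-\frac{|x|^2}{4(\theta+t)}\big)_+$ (modulo a sign typo in the displayed formula), for which the terms $\partial_t\tilde p$ and $|\nabla\tilde p|^2$ cancel exactly and the supersolution property reduces to $\frac{d}{2(\theta+t)}\geq G_m$; this holds only on a time interval of length of order $d/G_m$, so the argument must be iterated on successive intervals to reach $T$, and in exchange the support grows like $\sqrt{C(\theta+t)}$, which is literally the form quoted in the statement. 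Your paraboloid $A\big(R(t)^2-|x|^2\big)_+$ with $A=G_m/(2d)$ and $R(t)^2=R(0)^2e^{4At}$ is a supersolution globally in time, so no iteration is needed; the price is an exponentially growing radius, which is immaterial here since the constant $C$ in the statement may depend on $T$ and the lemma is only used (in Lemma \ref{lem:L2dp}) to bound the support uniformly on $[0,T]$. Both proofs gloss over the same technicality at the kink of the barrier; as you note, it is handled by observing that the flux $\nabla H(\bar n_\epsilon)=\bar n_\epsilon\nabla\bar p_\epsilon$ vanishes at the free boundary, so no singular surface term spoils the distributional inequality.

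One small correction: the conditions ``$K\subset B(0,R(0))$ and $AR(0)^2\geq P_M$'' do not by themselves yield $\bar p_\epsilon(0,\cdot)\geq p_\epsilon^{ini}$, since $A\big(R(0)^2-|x|^2\big)$ can be small for $x\in K$ near $\partial B(0,R(0))$. The correct requirement is $A\big(R(0)^2-\sup_{x\in K}|x|^2\big)\geq P_M$, which is again simply ``$R(0)$ large enough'', so the argument is unaffected.
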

\begin{proof} 
Using the equation on $p_{\epsilon}$ \eqref{eq:p1},
$$
\partial_t p_{\epsilon} - (\frac{p_{\epsilon}^2}{\epsilon}+p_{\epsilon})\Delta p_{\epsilon} -  |\nabla p_{\epsilon}|^2 =(\frac{p_{\epsilon}^2}{\epsilon}+p_{\epsilon}) G(p_{\epsilon}) \leq G_{m}(\frac{p_{\epsilon}^2}{\epsilon}+p_{\epsilon}).
$$
Let us introduce for $C>0$,
$$ \tilde{p}(t,x) = \left(C+\frac{|x|^2}{4(\theta+t)}\right)_+ ,$$ with
$\theta = \frac{d}{4 G_{m}}$. Then $\tilde{p}$ is compactly supported in $B(0,R_\theta(t))$ with 
$R_\theta(t)= 2 \sqrt {C(\theta+t)}.$
We have
$$\partial_t \tilde{p} = \frac{|x|^2}{4(\theta+t)^2} 1_{|x|\leq R_\theta(t)}, \qquad
|\nabla \tilde{p}|^2= \frac{|x|^2}{4(\theta+t)^2} 1_{|x|\leq R_\theta(t)}, $$
and
$$ \Delta \tilde{p} = -\frac{d}{(\theta+t)}, \mbox{ for } |x| < R_\theta(t).$$
Then, for all $t\in [0,\theta]$,
\begin{equation}\label{ineqptilde}
\partial_t \tilde{p}- (\frac{\tilde{p}^2}{\epsilon}+\tilde{p})\Delta \tilde{p} 
-|\nabla \tilde{p}|^2 -G_{m}(\frac{\tilde{p}^2}{\epsilon}+\tilde{p}) 
= (\frac{\tilde{p}^2}{\epsilon}+\tilde{p}) (\frac{d}{(\theta+t)}-G_{m}) \geq 0.
\end{equation}
In other words, $\tilde{p}$ is a supersolution for the equation for the pressure. 
Let us show that it implies that $p\leq \tilde{p}$. We define 
$\tilde{n} = \frac{\tilde{p}}{\epsilon+\tilde{p}}= N(\tilde{p})$. We know that
$$ N'(\tilde{p})= \frac{\epsilon}{(\epsilon+\tilde{p})^2} >0. $$
Then, on the one hand, multiplying \eqref{ineqptilde} with by $N'(\tilde{p})$ we get
$$ \partial_t \tilde{n}-\nabla.(\tilde{n} \nabla \tilde{p}) -G_{m}\tilde{n} \geq 0.$$
On the other hand, from \eqref{eq:n},
$$  \partial_t n_{\epsilon}-\nabla.(n_{\epsilon} \nabla p_{\epsilon}) \leq G_{m}n_{\epsilon}.$$
By the comparison principle (see Lemma \ref{lem:estim}), we have
$$ n^{ini}_{\epsilon} \leq \tilde{n}^{ini} \Rightarrow n_{\epsilon} \leq \tilde{n}. $$
Thus, for all $t\in [0,\theta]$,
$$ p^{ini}_{\epsilon} \leq \tilde{p}(t=0) \Rightarrow p_{\epsilon} \leq \tilde{p}. $$
and $p_\epsilon(t)$ is compactly supported in $B(0,R_\theta(t))$ provided we choose $C$ large enough such
that $p_\epsilon^{ini}(x)\leq \tilde{p}(t=0,x)$, which can be done thanks to our assumption on the initial data \eqref{hypini}.

Since $p_\epsilon$ is uniformly bounded in $L^\infty$, we may iterate the process on $[\theta,2\theta]$.
After several iterations, we reach the time $T$ and prove the result on $[0,T]$.
\end{proof}

 \subsection{$L^2$ estimate for $\nabla p$}

In the following Lemma, we state a uniform $L^2$ estimate on the gradient of the pressure.
\begin{lem}[$L^2$ estimate for $\nabla p$]\label{lem:L2dp}
Under the same assumptions as in Theorem \ref{TH1}, we have a uniform bound on $\nabla p_\epsilon$ in $L^2(Q_T)$.
\end{lem}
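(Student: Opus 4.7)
The plan is to test the density equation \eqref{eq:n} against a well-chosen function $\Phi(p_\epsilon)$ of the pressure, designed so that after an integration by parts the dissipation is \emph{exactly} $\int|\nabla p_\epsilon|^2\,dx$. Integration by parts produces the factor $\Phi'(p_\epsilon)\,n_\epsilon\,|\nabla p_\epsilon|^2$, and combined with the relation $n_\epsilon=p_\epsilon/(p_\epsilon+\epsilon)$, demanding $\Phi'(p_\epsilon)n_\epsilon=1$ on $\{p_\epsilon>0\}$ forces $\Phi'(p)=1+\epsilon/p$, hence $\Phi(p)=p+\epsilon\ln p$. Since this multiplier is singular at $p=0$, I first regularize to $\Phi_\delta(p):=p+\epsilon\ln(p+\delta)$ for $\delta>0$. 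Testing \eqref{eq:n} against $\Phi_\delta(p_\epsilon)$ and integrating by parts in space yields the identity
\begin{equation*}
\frac{d}{dt}\int_{\RR^d}\mathcal{F}_\delta(n_\epsilon)\,dx+\int_{\RR^d}\frac{p_\epsilon(p_\epsilon+\epsilon+\delta)}{(p_\epsilon+\delta)(p_\epsilon+\epsilon)}|\nabla p_\epsilon|^2\,dx=\int_{\RR^d}\Phi_\delta(p_\epsilon)\,n_\epsilon\,G(p_\epsilon)\,dx,
\end{equation*}
with $\mathcal{F}_\delta(n):=\int_0^n\Phi_\delta(P(s))\,ds$; the dissipation weight is nonnegative and increases pointwise to $\mathbf{1}_{\{p_\epsilon>0\}}$ as $\delta\downarrow 0$.

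The next task is to bound the primitive and the right-hand side uniformly in both $\epsilon\in(0,\epsilon_0)$ and $\delta\in(0,1)$. The only delicate contribution is $\epsilon\, n_\epsilon\,\ln(p_\epsilon+\delta)$: on $\{p_\epsilon\geq\epsilon\}$ I would use $\epsilon n_\epsilon\leq\epsilon$ together with $|\ln(p_\epsilon+\delta)|\leq|\ln\epsilon|+\ln P_M+O(1)$ to bound it by a multiple of $\epsilon|\ln\epsilon|$; on $\{p_\epsilon<\epsilon\}$ I would use $n_\epsilon\leq p_\epsilon/\epsilon$ to reduce it to $(p_\epsilon+\delta)|\ln(p_\epsilon+\delta)|\leq 1/e$, the elementary bound on $s|\ln s|$ over $(0,1]$. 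Combined with $G\leq G_m$ and the compact support of $n_\epsilon$ from Lemma~\ref{lem:supp}, this produces uniform bounds on $\int\mathcal{F}_\delta(n_\epsilon^{ini})\,dx$, $\int\mathcal{F}_\delta(n_\epsilon(T))\,dx$ and $\int_0^T\!\!\int_{\RR^d}\Phi_\delta(p_\epsilon)\,n_\epsilon\,G(p_\epsilon)\,dx\,dt$.

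Finally, I would integrate the identity in time on $[0,T]$ and pass to the limit $\delta\downarrow 0$: monotone convergence applied to the dissipation weight (together with the fact that $\nabla p_\epsilon=0$ almost everywhere on $\{p_\epsilon=0\}$) and dominated convergence on the remaining terms produce $\int_0^T\!\!\int_{\RR^d}|\nabla p_\epsilon|^2\,dx\,dt\leq C$ with $C$ independent of $\epsilon$. The main obstacle is precisely the singularity of the natural test function $\Phi$ at $p=0$; the $\delta$-regularization together with the case-analysis $p_\epsilon \gtrless \epsilon$ above is what makes the argument rigorous and uniform in $\epsilon$.
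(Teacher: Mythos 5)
Your proposal is correct and follows essentially the same route as the paper: your multiplier $\Phi(p)=p+\epsilon\ln p$ coincides, up to the additive constant $\epsilon(1-\ln\epsilon)$, with the paper's choice $\psi(n)=\epsilon\big(\ln n-\ln(1-n)+\frac{1}{1-n}\big)$ determined by $n\psi'(n)=P'(n)$, and the control of the logarithmic term through the elementary bound on $s|\ln s|$ combined with the finite speed of propagation (Lemma \ref{lem:supp}) is exactly the paper's key estimate. The $\delta$-regularization and the case split $p_\epsilon\gtrless\epsilon$ are just a more careful bookkeeping of the same computation, not a different argument.
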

\begin{proof}
For a given function $\psi$ we have, multiplying \eqref{eq:n} by $\psi(n_\epsilon)$,
$$
\partial_t n_{\epsilon} \psi(n_\epsilon) -\nabla(n_{\epsilon} \nabla p_{\epsilon})\psi(n_{\epsilon})= n_{\epsilon} G(p_{\epsilon})\psi(n_\epsilon).
$$
Let $\Psi$ be an antiderivative of $\psi$, we have thanks to an integration by parts
$$ 
\frac{d}{dt} \int_{\RR^d} \Psi(n_{\epsilon})\,dx + \int_{\RR^d} n_{\epsilon} \nabla n_{\epsilon}\cdot \nabla p_{\epsilon}\psi'(n_{\epsilon})\,dx = \int_{\RR^d} n_{\epsilon} G(p_{\epsilon})\psi(n_{\epsilon})\,dx.
$$
We choose $\psi$ such as $n_{\epsilon} \nabla n_{\epsilon}\cdot \nabla p_{\epsilon}\psi'(n_{\epsilon})= |\nabla p_{\epsilon}|^2$,
i.e. $n_{\epsilon}\psi'(n_{\epsilon})=p'(n_{\epsilon})$.
After straightforward computations, we find
$\psi(n)= \epsilon (\ln(n)-\ln(1-n)+\frac{1}{1-n})$ and 
$\Psi(n)= \epsilon n(\ln(n)-\ln(1-n))$.
It gives 
\begin{align*}
&\frac{d}{dt} \int_{\RR^d} \epsilon n_{\epsilon} \ln\Big(\frac{n_{\epsilon}}{1-n_{\epsilon}}\Big)\,dx 
+ \int_{\RR^d} |\nabla p_{\epsilon}|^2 dx  
\leq G_{m} \int_{\RR^d} \epsilon n_{\epsilon}\left|\ln(n_{\epsilon})-\ln(1-n_{\epsilon})+\frac{1}{1-n_{\epsilon}}\right|\,dx.
\end{align*}
We integrate in time, using also the expression of $p_\epsilon$ in \eqref{eq:p},
\begin{align*}
&\int_{\RR^d} \epsilon n_{\epsilon} \ln\Big(\frac{p_{\epsilon}}{\epsilon}\Big)\,dx 
-\int_{\RR^d} \epsilon n_{\epsilon}^{ini} \ln\left(\frac{n_{\epsilon}^{ini}}{1-n_{\epsilon}^{ini}}\right)\,dx + \int_0^T \int_{\RR^d} |\nabla p_{\epsilon}|^2\,dxdt   \\
&\leq G_{m} \int_0^T \int_{\RR^d} \left(\epsilon n_{\epsilon} \Big|\ln\Big(\frac{p_{\epsilon}}{\epsilon}\Big)\Big|+p_{\epsilon}\right) \,dx.
\end{align*}
Then, to have a bound on the $L^2$-norm of $\nabla p_\epsilon$, it suffices to prove a uniform control on 
$ \int_{\RR^d} \epsilon n_{\epsilon} |\ln(\frac{p_{\epsilon}}{\epsilon})|dx$. We have
$$
\int_{\RR^d} \epsilon n_{\epsilon} |\ln\big(\frac{p_{\epsilon}}{\epsilon}\big)|\,dx \leq \int_{\RR^d} \epsilon n_{\epsilon} |\ln p_{\epsilon}| \,dx  + \epsilon \ln(\epsilon) \int_{\RR^d} n_{\epsilon}\,dx.
$$
The second term of the right hand side is small when $\epsilon$ is small thanks to the $L^1$ bound on $n_\epsilon$, 
thus it is uniformly bounded.
Using the expression of $p_\epsilon$ in \eqref{eq:p}, we get
$$
\int_{\RR^d} \epsilon n_\epsilon |\ln(\frac{p_{\epsilon}}{\epsilon})|\,dx \leq  \int_{\RR^d} (1-n_{\epsilon})p_{\epsilon} |\ln p_{\epsilon}|\,dx + C.
$$
Then, since $0\leq p_\epsilon \leq P_M$ and since $x\mapsto x|\ln x|$ is uniformly bounded on $[0,P_M]$, we get
$$  \int_{\RR^d} (1-n_{\epsilon})p_{\epsilon} |\ln(p_{\epsilon})|\,dx \leq C \int_{\RR^d} \mathbf{1}_{p_{\epsilon}>0} \,dx.$$
We conclude thanks to Lemma \ref{lem:supp}, which provides a uniform control on the support of $p_\epsilon$.
\end{proof}

\section{Regularizing effect and time compactness}\label{sec:tcompact}

As already noticed in \cite{PQTV}, regularizing effects, similar to the ones observed for the heat equation \cite{AB,CP},
allow to deduce estimates on the time derivatives.

\begin{lem}\label{lem:regul}
Under the assumptions \eqref{hypG} and \eqref{hypini}, the weak solution $(\rho_k,p_k)$ satisfies the
estimates
$$
\partial_t p_\epsilon \geq - \frac{\kappa p_\epsilon}{t}, \qquad \partial_t n_\epsilon \geq -\frac{\kappa n_\epsilon}{t},
$$
for a large enough (independent of $\epsilon$) constant $\kappa$.
\end{lem}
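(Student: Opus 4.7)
The plan is to adapt the classical Aronson--Bénilan regularizing-effect argument to the modified pressure equation \eqref{eq:p1}. First, I rewrite \eqref{eq:p1} as
\[
\partial_t p_\epsilon = a(p_\epsilon)\,\Delta p_\epsilon + |\nabla p_\epsilon|^2 + a(p_\epsilon)\,G(p_\epsilon), \qquad a(p) := \tfrac{p^2}{\epsilon} + p,
\]
set $w := \partial_t p_\epsilon$, and differentiate this equation in $t$. Substituting $a(p_\epsilon)\Delta p_\epsilon = w - |\nabla p_\epsilon|^2 - a(p_\epsilon) G(p_\epsilon)$ back into the result and using the algebraic identity $a'(p)/a(p) = \beta(p)/p$ where $\beta(p) := (2p+\epsilon)/(p+\epsilon) \in [1,2]$ (together with the cancellation $(\beta(p)/p)\,a(p) G(p) w = a'(p) G(p)w$) yields the quasilinear Aronson--Bénilan-type relation
\[
\partial_t w - a(p_\epsilon)\,\Delta w - 2\nabla p_\epsilon\cdot\nabla w = \frac{\beta(p_\epsilon)}{p_\epsilon}\,w^2 - \frac{\beta(p_\epsilon)}{p_\epsilon}\,|\nabla p_\epsilon|^2\,w + a(p_\epsilon)\,G'(p_\epsilon)\,w,
\]
whose characteristic quadratic-in-$w$ structure is what drives the regularizing effect.

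Next I would introduce the auxiliary quantity $\phi := t w + \kappa p_\epsilon$ for a constant $\kappa\geq 1$, and aim to prove $\phi\geq 0$ by a comparison argument; this is exactly the desired bound $\partial_t p_\epsilon \geq -\kappa p_\epsilon/t$. Expanding $\partial_t \phi = (1+\kappa)w + t\partial_t w$ via the above relation, using $tw = \phi - \kappa p_\epsilon$ to make the quadratic-in-$w$ term linear in $\phi$, and eliminating $\Delta p_\epsilon$ once more through the pressure equation, one arrives after regrouping at
\[
\partial_t \phi - a(p_\epsilon)\,\Delta \phi - 2\nabla p_\epsilon\cdot\nabla\phi - d_\epsilon(t,x)\,\phi \;\geq\; \frac{\kappa\, p_\epsilon\,(\kappa\beta(p_\epsilon)-1)}{t} + R_\epsilon,
\]
with the remainder $R_\epsilon := \kappa(\beta(p_\epsilon)-1)\,|\nabla p_\epsilon|^2 + \kappa a(p_\epsilon) G(p_\epsilon) - \kappa p_\epsilon a(p_\epsilon) G'(p_\epsilon) \geq 0$ thanks to $\beta\geq 1$ and assumption \eqref{hypG} (which gives $G\geq 0$ and $G'<0$). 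The choice $\kappa\geq 1$ makes the right-hand side nonnegative, and by \eqref{hypini} one has $\phi(0^+,\cdot) = \kappa p_\epsilon^{ini}(\cdot) \geq 0$. A maximum principle then delivers $\phi\geq 0$, which is the pressure bound.

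The density estimate then comes for free: differentiating $n_\epsilon = p_\epsilon/(\epsilon+p_\epsilon)$ from \eqref{eq:p} gives $\partial_t n_\epsilon = \epsilon\,\partial_t p_\epsilon/(\epsilon+p_\epsilon)^2$, so combining with the pressure bound,
\[
\partial_t n_\epsilon \;\geq\; -\frac{\kappa\,\epsilon\, p_\epsilon}{t(\epsilon+p_\epsilon)^2} = -\frac{\kappa\, n_\epsilon(1-n_\epsilon)}{t} \;\geq\; -\frac{\kappa\, n_\epsilon}{t},
\]
using $0\leq n_\epsilon\leq 1$ from Lemma \ref{lem:estim}.

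The main obstacle is the rigor of the maximum principle step: the zero-order coefficient $d_\epsilon$ involves both $w = \partial_t p_\epsilon$ (via the $(\beta(p_\epsilon)/p_\epsilon)\,w$ contribution) and a factor $1/p_\epsilon$, and is therefore not uniformly bounded, especially near the free boundary $\{p_\epsilon = 0\}$. As is customary for Aronson--Bénilan-type inequalities, this is handled by first regularizing the problem --- for instance replacing $n_\epsilon^{ini}$ by a mollified datum bounded away from $0$ and from the saturation value $1$, so as to obtain a classical strictly positive solution of \eqref{eq:n}--\eqref{eq:p} on which the formal manipulations above are justified --- proving the inequality at that regularized level, and then passing to the limit in the regularization parameter to recover the claim.
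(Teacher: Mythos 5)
Your proposal is correct at the same (formal) level of rigor as the paper's argument, but it takes a genuinely different route. The paper never differentiates \eqref{eq:p1} in time: it sets $w_\epsilon=\Delta p_\epsilon+G(p_\epsilon)$, derives a quasilinear inequality $\partial_t w_\epsilon\ge\mathcal{F}(w_\epsilon)$, and compares $w_\epsilon$ with the function $W=-h(p_\epsilon)/t$, $h(p)=\kappa\epsilon/(p+\epsilon)$, tuned so that all sign conditions hold; since $\bigl(\tfrac{p_\epsilon^2}{\epsilon}+p_\epsilon\bigr)h(p_\epsilon)=\kappa p_\epsilon$, the bound $w_\epsilon\ge W$ plugged back into \eqref{eq:p1} gives $\partial_t p_\epsilon\ge-\kappa p_\epsilon/t$, and the density bound follows exactly as in your last display. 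You instead differentiate the pressure equation in time, close an inequality for $w=\partial_t p_\epsilon$ (your identity $a'/a=\beta/p$ with $a(p)=p^2/\epsilon+p$ and the cancellation of the $a'Gw$ terms check out), and run a maximum principle on $\phi=tw+\kappa p_\epsilon$; your remainder $R_\epsilon\ge0$ is correct because $p_\epsilon\in[0,P_M]$ (Lemma \ref{lem:estim}) together with \eqref{hypG} gives $G(p_\epsilon)\ge0$ and $G'(p_\epsilon)<0$, and $\kappa\ge1$ indeed suffices, uniformly in $\epsilon$. What each approach buys: the paper's version avoids second time derivatives and yields as a by-product the Aronson--B\'enilan-type bound $\Delta p_\epsilon+G(p_\epsilon)\ge-\kappa\epsilon/(t(p_\epsilon+\epsilon))$ (not used elsewhere, but stronger-looking), while yours reaches the stated time-derivative bounds directly with an explicit, dimension-free constant. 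Both proofs are formal at the same place --- a comparison/maximum principle for an operator with rough, and in your case singular near $\{p_\epsilon=0\}$, zero-order coefficient --- and both must be justified by approximation; the paper merely invokes ``the sub- and super-solution technique'', whereas you make the regularization explicit (data pinched between $\delta>0$ and $n_M=P_M/(P_M+\epsilon)<1$ renders \eqref{eq:nH} uniformly parabolic, so the solution is classical, $t\,\partial_t p_\epsilon\to0$ as $t\to0^+$, and $L^1$-stability passes the inequality to the limit). One observation that would strengthen your maximum-principle step: at a negative minimum of $\phi$ one has $tw<-\kappa p_\epsilon\le0$, hence $w<0$ there, so the only potentially unbounded-above piece of your coefficient $d_\epsilon$, namely $\beta(p_\epsilon)w/p_\epsilon$, automatically has the favorable sign at the points that matter; thus the singularity you flag costs nothing beyond the classical smoothness supplied by the regularization.
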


\begin{proof}
Let us denote $w_\epsilon=\Delta p_\epsilon + G(p_\epsilon)$, the equation on the pressure \eqref{eq:p1} reads
\begin{equation}\label{eq:preg}
\partial_t p_\epsilon = \left(\frac{p_\epsilon^2}{\epsilon}+p_\epsilon\right) w_\epsilon + |\nabla p_\epsilon|^2.
\end{equation}
The proof is divided into several steps. We first find a lower bound for $w_\epsilon$ by using the comparison principle.
Then we deduce estimates on the density and on the pressure. \\

{\it 1st step.} Thanks to \eqref{eq:preg}, we deduce an equation satisfied by $w_\epsilon$.
On the one hand, by multiplying \eqref{eq:preg} by $G'(p_\epsilon)$, we deduce, since $G$ is decreasing from \eqref{hypG}
\begin{equation}\label{ineq:Gp}
\partial_t G(p_\epsilon) \geq G'(p_\epsilon) \big(\frac{p_\epsilon^2}{\epsilon}+p_\epsilon\big) w_\epsilon + 2 \nabla G(p_\epsilon)\cdot \nabla p_\epsilon.
\end{equation}
On the other hand, we have
\begin{align*}
\partial_t \Delta p_\epsilon = & \Delta w_\epsilon \big(\frac{p_\epsilon^2}{\epsilon}+p_\epsilon\big) + 2 \nabla\big(\frac{p_\epsilon^2}{\epsilon}+p_\epsilon\big)
\cdot \nabla w_\epsilon + w_\epsilon \Delta (\frac{p_\epsilon^2}{\epsilon}+p_\epsilon)  \\
& + 2 \nabla p_\epsilon\cdot \nabla(\Delta p_\epsilon) + 2 \sum_{i,j=1}^d (\partial_{x_ix_j} p_\epsilon)^2   \\
\geq &\Delta w_\epsilon (\frac{p_\epsilon^2}{\epsilon}+p_\epsilon) + 2 \nabla(\frac{p_\epsilon^2}{\epsilon}+p_\epsilon)
\cdot \nabla w_\epsilon + w_\epsilon \Delta (\frac{p_\epsilon^2}{\epsilon}+p_\epsilon)  \\
& + 2 \nabla p_\epsilon\cdot \nabla(\Delta p_\epsilon) + \frac{2}{d} (\Delta p_\epsilon)^2.
\end{align*}
Thus, with \eqref{ineq:Gp}, we deduce that $w_\epsilon=\Delta p_\epsilon+G(p_\epsilon)$ satisfies
\begin{align*}
\partial_t w_\epsilon \geq &\Delta w_\epsilon (\frac{p_\epsilon^2}{\epsilon}+p_\epsilon) + 2 \nabla(\frac{p_\epsilon^2}{\epsilon}+p_\epsilon)
\cdot \nabla w_\epsilon + w_\epsilon \Big(\Delta p_\epsilon (\frac{2p_\epsilon}{\epsilon}+1) + \frac{2}{\epsilon} |\nabla p_\epsilon|^2  \\
& + (\frac{p_\epsilon^2}{\epsilon}+p_\epsilon)G'(p_\epsilon)\Big)
 + 2 \nabla p_\epsilon\cdot \nabla w_\epsilon + \frac{2}{d} (\Delta p_\epsilon)^2.
\end{align*}
By definition of $w_\epsilon$, we have $(\Delta p_\epsilon)^2 \geq w_\epsilon^2 - 2 G(p_\epsilon) w_\epsilon$. Thus we deduce that 
\begin{equation}\label{eq:w}
\partial_t w_\epsilon \geq \mathcal{F}(w_\epsilon),
\end{equation}
where we have used the notation
\begin{align}
\mathcal{F}(w) := & \Delta w (\frac{p_\epsilon^2}{\epsilon}+p_\epsilon) + 2 \nabla(\frac{p_\epsilon^2}{\epsilon}+2p_\epsilon)
\cdot \nabla w + \frac{2}{\epsilon} |\nabla p_\epsilon|^2 w + w^2 (\frac{2p_\epsilon}{\epsilon}+1+ \frac{2}{d})  \nonumber \\
& - w\Big(G(p_\epsilon)(\frac{2 p_\epsilon}{\epsilon} +1 + \frac{4}{d}) - (\frac{p_\epsilon^2}{\epsilon} +p_\epsilon) G'(p_\epsilon)\Big).
\label{eq:F}
\end{align}

Following an idea of \cite{CP} which has been generalized in \cite{PQTV}, we introduce the function 
\begin{equation}\label{def:W}
W(t,x) = -\frac{h(p_\epsilon(t,x))}{t},
\end{equation}
where the function $h$ will be defined later such that $W$ is a subsolution for \eqref{eq:w}.
We compute
\begin{align*}
&\partial_t W = \frac{W^2}{h(p_\epsilon)} - \frac{h'(p_\epsilon)}{t} \partial_t p_\epsilon,  \\
&\nabla W = -\frac{h'(p_\epsilon)}{t} \nabla p_\epsilon, \qquad 
\Delta W = -\frac{h'(p_\epsilon)}{t} \Delta p_\epsilon - \frac{h''(p_\epsilon)}{t} |\nabla p_\epsilon|^2.
\end{align*}
Using again equation \eqref{eq:preg}, we have
\begin{align}
\partial_t W & = \frac{W^2}{h(p_\epsilon)} - \frac{h'(p_\epsilon)}{t}(\frac{p_\epsilon^2}{\epsilon}+p_\epsilon) \Delta p_\epsilon 
- \frac{h'(p_\epsilon)}{t}(\frac{p_\epsilon^2}{\epsilon}+p_\epsilon) G(p_\epsilon) - \frac{h'(p_\epsilon)}{t} |\nabla p_\epsilon|^2  \nonumber \\
& = \frac{W^2}{h(p_\epsilon)} + (\frac{p_\epsilon^2}{\epsilon}+p_\epsilon) \Delta W + \frac{h''(p_\epsilon)}{t} |\nabla p_\epsilon|^2 (\frac{p_\epsilon^2}{\epsilon}+p_\epsilon)
- \frac{h'(p_\epsilon)}{t} |\nabla p_\epsilon|^2   \nonumber \\
& \quad  - \frac{h'(p_\epsilon)}{t}  (\frac{p_\epsilon^2}{\epsilon}+p_\epsilon) G(p_\epsilon).
\label{eqW1}
\end{align}
By definition of $\mathcal{F}(W)$ in \eqref{eq:F}, we deduce with \eqref{eqW1},
\begin{align*}
\partial_t W = & \mathcal{F}(W) + 4(\frac{p_\epsilon}{\epsilon}+1) |\nabla p_\epsilon|^2 \frac{h'(p_\epsilon)}{t}
+ \frac{2}{\epsilon} \frac{h(p_\epsilon)}{t} |\nabla p_\epsilon|^2   \\
& + W^2\Big(\frac{1}{h(p_\epsilon)} -\frac{2p_\epsilon}{\epsilon} - 1 -\frac{2}{d}\Big)
+ \frac{h''(p_\epsilon)}{t} |\nabla p_\epsilon|^2 (\frac{p_\epsilon^2}{\epsilon}+p_\epsilon)
- \frac{h'(p_\epsilon)}{t} |\nabla p_\epsilon|^2   \\
& - \frac{h'(p_\epsilon)}{t}  (\frac{p_\epsilon^2}{\epsilon}+p_\epsilon) G(p_\epsilon) + W \Big(G(p_\epsilon)(\frac{2p_\epsilon}{\epsilon}+1+\frac{4}{d}) - (\frac{p_\epsilon^2}{\epsilon}+ p_\epsilon) G'(p_\epsilon)\Big).
\end{align*}
We may rearrange it into
\begin{align}
\partial_t W = & \mathcal{F}(W) + W^2\Big(\frac{1}{h(p_\epsilon)} -\frac{2p_\epsilon}{\epsilon} - 1 -\frac{2}{d}\Big)
+ \frac{|\nabla p_\epsilon|^2}{t} \Big(\big(h(p_\epsilon)(\frac{p_\epsilon^2}{\epsilon}+p_\epsilon)\big)''+h'(p_\epsilon)\Big)    \nonumber \\
& - \frac{h'(p_\epsilon)}{t}  (\frac{p_\epsilon^2}{\epsilon}+p_\epsilon) G(p_\epsilon) + W \Big(G(p_\epsilon)(\frac{2p_\epsilon}{\epsilon}+1+\frac{4}{d}) - (\frac{p_\epsilon^2}{\epsilon}+ p_\epsilon) G'(p_\epsilon)\Big).
\label{eqWf}
\end{align}

Let us choose
\begin{equation}\label{def:h}
h(p) = \frac{ \kappa \epsilon}{p+\epsilon},
\end{equation}
where $\kappa>0$ is chosen large enough (independent of $\epsilon$) such that
$$
\frac{1}{h(p_\epsilon)} = \frac{p_\epsilon+\epsilon}{ \kappa \epsilon} \leq \frac{2 p_\epsilon}{\epsilon}+1+\frac{2}{d}.
$$
Thanks to this choice, we have
$$
\big(h(p_\epsilon)(\frac{p_\epsilon^2}{\epsilon}+p_\epsilon)\big)''+h'(p_\epsilon) = -\frac{\kappa \epsilon}{(p_\epsilon+\epsilon)^2} \leq 0,
$$
and 
$$
- \frac{h'(p_\epsilon)}{t}  (\frac{p_\epsilon^2}{\epsilon}+p_\epsilon) = W \frac{p_\epsilon}{\epsilon}.
$$
Finally, we obtain from \eqref{eqWf}
$$
\partial_t W \leq \mathcal{F}(W) + W \Big(G(p_\epsilon)(\frac{p_\epsilon}{\epsilon}+1+\frac{4}{d}) - (\frac{p_\epsilon^2}{\epsilon}+ p_\epsilon) G'(p_\epsilon)\Big) \leq \mathcal{F}(W),
$$
where we use the fact that by definition \eqref{def:W} we have $W\leq 0$ (recalling also that $G$ is decreasing by assumption \eqref{hypG}).

Thus, by the sub- and super-solution technique, we deduce, using also \eqref{eq:w} that 
\begin{equation}\label{estimw}
w_\epsilon\geq W = - \frac{\kappa \epsilon}{t(p_\epsilon+\epsilon)}.
\end{equation}
 \\

{\it 2nd step.} 
Using again equation \eqref{eq:preg}, we get from \eqref{estimw}
$$
\partial_t p_\epsilon \geq (\frac{p_\epsilon^2}{\epsilon}+p_\epsilon) W = - \frac{\kappa p_\epsilon}{t},
$$
which is the first inequality of Lemma \ref{lem:regul}.
Finally, by definition \eqref{eq:p}, we have also $n_\epsilon = \frac{p_\epsilon}{p_\epsilon+\epsilon}$.
Thus 
\begin{align*}
\partial_t n_\epsilon & = \frac{\epsilon}{(p_\epsilon+\epsilon)^2} \partial_t p_\epsilon 
\geq -\frac{\kappa \epsilon p_\epsilon}{t(p_\epsilon+\epsilon)^2}
= -\frac{\kappa n_\epsilon(1-n_\epsilon)}{t},
\end{align*}
where we use the definition \eqref{eq:p} for the last identity. We conclude easily the proof.
\end{proof}

Thanks to this latter Lemma, we may deduce uniform estimates on the time derivative of $n_\epsilon$ and $p_\epsilon$.
\begin{lem}\label{estimdtn}
For any $\tau>0$, we have that $\partial_t n_\epsilon$ is uniformly bounded in $L^\infty([\tau,T];L^1(\RR^d))$ and 
$\partial_t p_\epsilon$ is uniformly bounded in $L^1([\tau,T]\times\RR^d)$.
\end{lem}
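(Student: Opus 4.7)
The plan is to exploit the elementary identity $|f|=f+2f_-$, which reduces the task of bounding $\|\partial_t u\|_{L^1}$ to bounding $\int \partial_t u$ and $\int (\partial_t u)_-$ separately. The negative part is precisely what Lemma \ref{lem:regul} controls, while the signed integral is obtained trivially by integrating the PDE in space (which eliminates the flux, thanks to the compact support from Lemma \ref{lem:supp}).

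For $\partial_t n_\epsilon$, I would integrate \eqref{eq:n} in $x$ to get
$$
\int_{\RR^d} \partial_t n_\epsilon(t,x)\,dx = \int_{\RR^d} n_\epsilon(t,x) G(p_\epsilon(t,x))\,dx \leq G_m \|n_\epsilon(t)\|_{L^1(\RR^d)},
$$
and use the first inequality of Lemma \ref{lem:regul} to write, for every $t\geq \tau$,
$$
\int_{\RR^d} (\partial_t n_\epsilon)_-(t,x)\,dx \leq \frac{\kappa}{t}\int_{\RR^d} n_\epsilon(t,x)\,dx \leq \frac{\kappa}{\tau}\|n_\epsilon(t)\|_{L^1(\RR^d)}.
$$
Combining these two bounds through $|\partial_t n_\epsilon|=\partial_t n_\epsilon + 2(\partial_t n_\epsilon)_-$ and invoking the uniform $L^\infty_t L^1_x$ estimate on $n_\epsilon$ from Lemma \ref{lem:estim} yields the desired uniform $L^\infty([\tau,T];L^1(\RR^d))$ bound on $\partial_t n_\epsilon$.

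For $\partial_t p_\epsilon$, the same strategy works in integrated form. The signed part is a telescoping quantity,
$$
\int_\tau^T\!\!\int_{\RR^d} \partial_t p_\epsilon\,dx\,dt = \|p_\epsilon(T)\|_{L^1(\RR^d)} - \|p_\epsilon(\tau)\|_{L^1(\RR^d)},
$$
which is uniformly bounded since $0\leq p_\epsilon\leq P_M$ has support controlled by Lemma \ref{lem:supp} (and $\|p_\epsilon\|_{L^1}$ is bounded by Lemma \ref{lem:estim}). The negative part is bounded, using again Lemma \ref{lem:regul}, by
$$
\int_\tau^T\!\!\int_{\RR^d} (\partial_t p_\epsilon)_-\,dx\,dt \leq \kappa\log(T/\tau)\sup_{t\in[\tau,T]}\|p_\epsilon(t)\|_{L^1(\RR^d)},
$$
and adding the two gives the $L^1([\tau,T]\times\RR^d)$ bound on $\partial_t p_\epsilon$.

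The only subtlety, and the most likely source of technical friction, is justifying the pointwise inequalities and the decomposition $|f|=f+2f_-$ at the level of weak solutions; at the formal/a priori level the argument is completely elementary, and in any rigorous treatment the estimates transfer from approximate smooth solutions to the weak limit by lower-semicontinuity of the $L^1$ norm. The singular power $1/t$ is harmless because we are working away from $t=0$ on $[\tau,T]$, which is exactly why the statement is quantified over $\tau>0$.
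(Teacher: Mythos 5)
Your proof is correct and follows essentially the same route as the paper: the decomposition $|\partial_t u| = \partial_t u + 2|\partial_t u|_-$, with the signed part controlled by integrating the equation in space (the $L^1$ growth bound of Lemma \ref{lem:estim}) and the negative part controlled by the regularizing estimate of Lemma \ref{lem:regul}, the factor $1/t$ being harmless on $[\tau,T]$. No substantive difference from the paper's argument.
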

\begin{proof}
We use the equality $|\partial_t n_\epsilon| = \partial_t n_\epsilon + 2 |\partial_t n_\epsilon|_-$, where we recall that $|\cdot|_-$
denotes the negative part.
Thus 
\begin{align*}
\|\partial_t n_\epsilon\|_{L^1(\RR^d)} & = \frac{d}{dt} \int_{\RR^d} n_\epsilon \,dx + 2 \int_{\RR^d} |\partial_t n_\epsilon|_- \,dx  \\
& \leq \Big(G_m + \frac{2 \kappa}{t}\Big) \|n_\epsilon\|_{L^1(\RR^d)},
\end{align*}
where we have used equation \eqref{eqnL1} to bound the first term and Lemma \ref{lem:regul} for the second term.
By the same token, we have
\begin{align*}
\|\partial_t p_\epsilon\|_{L^1([\tau,T]\times\RR^d)} & = \int_\tau^T \frac{d}{dt} \int_{\RR^d} p_\epsilon \,dx 
+ 2 \int_\tau^T\int_{\RR^d} |\partial_t p_\epsilon|_- \,dx    \\
& \leq \|p_\epsilon(T)\|_{L^1(\RR^d)} + \|p_\epsilon\|_{L^\infty([\tau,T];L^1(\RR^d))}  2 \kappa \ln(T/\tau).
\end{align*}
We conclude the proof thanks to the estimates on $n_\epsilon$ and $p_\epsilon$ in $L^1\cap L^\infty$ obtained in Lemma \ref{lem:estim}.
\end{proof}

\section{Convergence}\label{sec:conv}

This section is devoted to the proof of Theorem \ref{TH1} apart from the complementary relation \eqref{compl} which is postponed to the next section.
 
Since the sequences $(n_{\epsilon})_{\epsilon}$ and $(p_{\epsilon})_{\epsilon}$ are bounded in $W^{1,1}_{loc}(Q_T)$, due to Lemma \ref{lem:estim} and \ref{estimdtn}, we may apply the Helly theorem and recover strong convergence in $L^1_{loc}(Q_T)$, up to an extraction. If we want to extend this local convergence to a global convergence in $L^1(Q_T)$ we need to prove that we can control the mass in an initial strip and in the tail.
Indeed, let $\epsilon,\epsilon' >0$, $R >0$, $\tau >0$
\begin{align*}
\| n_{\epsilon}-n_{\epsilon'} \|_{L^1(Q_T)} = &  \int_0^T \int_{\RR^d} |n_{\epsilon}(t,x)-n_{\epsilon'}(t,x)| dx dt \\
 \leq & \int_{\tau}^T \int_{B(0,R)} |n_{\epsilon}(t,x)-n_{\epsilon'}(t,x)| dx dt \\
 & + \int_{\tau}^T \int_{\RR^d \setminus { B(0,R)}} |n_{\epsilon}(t,x)-n_{\epsilon'}(t,x)| dx dt  \\
 & + \int_0^{\tau} \int_{\RR^d} |n_{\epsilon}(t,x)-n_{\epsilon'}(t,x)| dx dt.
\end{align*}
Since we have strong convergence of $n_{\epsilon}$ in $L^1_{loc}(Q_T)$,
$$ \int_{\tau}^T \int_{B(0,R)} |n_{\epsilon}(t,x)-n_{\epsilon'}(t,x)| \,dxdt \underset{\epsilon\to 0}{\longrightarrow} 0.$$
Then we have to control the two other terms in the right hand side. 

 The control of the initial strip comes from the $L^1$ estimate of $n$,
 $$ \int_0^{\tau} \int_{\RR^d} |n_{\epsilon}(t,x)-n_{\epsilon'}(t,x)| dx dt \leq \int_0^{\tau} \Big(\|n_{\epsilon}(t,x)\|_{L^1(\RR^d)}+\|n_{\epsilon'}(t,x)\|_{L^1(\RR^d)} \Big) dt \underset{\tau \to 0}{\longrightarrow} 0$$
 
 For the control of the tail we consider $\phi \in C^{\infty}(\RR)$ such that $0 \leq \phi \leq 1$, $\phi(x)=0$ for $|x|<R-1$ and $\phi(x)=1$ for $|x|>R$. We define $\phi_R(x)=\phi(x/R)$. Then
\begin{align*}
 \int_{\tau}^T \int_{\RR^d \setminus{ B(0,R)}} |n_{\epsilon}(t,x)-n_{\epsilon'}(t,x)| dx dt \leq &  \int_{\tau}^T \int_{\RR^d \setminus{ B(0,R)}} |n_{\epsilon}(t,x)-n_{\epsilon'}(t,x)| \phi_R dx dt \\
 \leq &   \int_{\tau}^T \int_{\RR^d \setminus{ B(0,R)}} (n_{\epsilon}(t,x)+n_{\epsilon'}(t,x)) \phi_R \,dxdt,\\
 \end{align*}
where the notation $C$ stand for a generic nonnegative constant.
Moreover, using equation \eqref{eq:nH}, we deduce
\begin{align*}
\frac{d}{dt} \int_{\RR^d} n_{\epsilon} \phi_R\,dx = &
\int_{\RR^d} H(n_\epsilon) \Delta \phi_R \,dx + \int_{\RR^d} n_\epsilon G(p_\epsilon) \phi_R\,dx  \\
\leq & C R^{-2} \|\Delta \phi\|_{L^{\infty}} + G_{m} \int_{\RR^d} n_{\epsilon} \phi_R\, dx.
\end{align*}
Then, integrating on $[0,T]$, we get
\begin{align*}
0 \leq \int_{\RR^d} n_{\epsilon} \phi_R dx \leq & e^{G_{m} T} \left(\int_{\RR^d} n^{ini}_{\epsilon} \phi_R+CR^{-2}T\right) \\
\leq & e^{G_{m} T} \left(\| n^{ini}_{\epsilon}-n^{ini}\|_{L^1(\RR^d)} + \int_{\RR^d} n^{ini} \phi_R\,dx +CR^{-2}T\right).
\end{align*}
By assumption \eqref{hypini}, since the initial data is uniformly compactly supported, we deduce that the right hand side tends to $0$ as $R$ goes to $+\infty$ and ${\epsilon}$ goes to $0$. Then $(n_{\epsilon})_\epsilon$ is a Cauchy sequence in $L^1(Q_T)$. It implies its convergence in  $L^1(Q_T)$. The convergence of the pressure follows from the same kind of computation. The only difference is for the control of the tail and which is directly given by the estimate
$$
0\leq \int_{\RR^d} p_{\epsilon} \phi_R\,dx \leq (\epsilon+P_M)\int_{\RR^d} n_{\epsilon} \phi_R \,dx.
$$

Therefore, we can extract subsequences and pass to the limit in the equation
$$ (1-n_{\epsilon}) p_{\epsilon} = \epsilon n_{\epsilon},$$ which implies
$$ 
(1-n_0) p_0 =0.
$$
This is the relation \eqref{n0p0}.
We can also pass to the limit in the uniform estimate of Lemma \ref{lem:estim} which provides \eqref{boundn0p0} and $n_0, p_0 \in BV(Q_T)$.

\paragraph{Limit model.} 
We first recall that from \eqref{eq:nH}, we have
$$
\partial_t n_\epsilon - \Delta (p_\epsilon - \epsilon \ln(p_\epsilon + \epsilon)) = n_\epsilon G(p_\epsilon).
$$
We get,
$$
\epsilon \ln \epsilon \leq \epsilon \ln(p_\epsilon + \epsilon) \leq \epsilon \ln(P_M + \epsilon).
$$
Thus, the term in the Laplacien converges strongly to $p_0$ as $\epsilon$ goes to $0$. 
Then, thanks to the strong convergence of $n_\epsilon$
and $p_\epsilon$, we deduce that in the sense of distribution $(n_0,p_0)$ satisfies \eqref{eqn0}.
Moreover, due to the uniform estimate on $\nabla p$ in $L^2(Q_T)$ of Lemma \ref{lem:L2dp}, 
we can show, by passing into the limit in a product of a weak-strong convergence, that in the sense of distribution $(n_0,p_0)$ satisfies \eqref{eq2n0}.

\paragraph{Time continuity.} 
Let us define $0<t_1< t_2 \leq T$,  $\eta>0$.
For a given $R>0$, we consider a smooth function $\zeta_R$ on $\RR^d$ such that $0 \leq \zeta_R \leq 1$, $\zeta_R(x)=1$ for $|x|<R-1$ and $\zeta_R(x)=0$ for $|x|>R$. 
We have
 \begin{align*}
 \int_{\RR^d}|n_0(t_2)-n_0(t_1)|\,dx =  \int_{\RR^d}|n_0(t_2)-n_0(t_1)|\zeta_R\,dx + \int_{\RR^d}|n_0(t_2)-n_0(t_1)|(1-\zeta_R)\,dx.
  \end{align*}
We have 
\begin{align*}
  \int_{\RR^d}|n_0(t_2)-n_0(t_1)|(1-\zeta_R)dx \leq \int_{\RR^d}n_0(t_2)(1-\zeta_R)dx +\int_{\RR^d}n_0(t_1)(1-\zeta_R)dx 
\end{align*}
with $1-\zeta_R$ a function which is zero on $B(0,R-1)$. Thus, as for the control of the tail, for $R$ large enough, we have, uniformly for $0<t_1< t_2 \leq T$,
\begin{align*}
  \int_{\RR^d}|n_0(t_2)-n_0(t_1)|(1-\zeta_R)dx \leq \eta.
\end{align*}
In addition, we know from Lemma \ref{lem:regul} (and the $L^\infty$ bound on $n_0$) that $\partial_t n_0 \geq -\frac{C}{t}$, so $\partial_t (n_0+ C\ln(t)) \geq 0$. Then, since $t_1 < t_2$,
\begin{align*}
  \int_{\RR^d} |n_0(t_2)-n_0(t_1)| \zeta_R \,dx \leq & \int_{\RR^d} (n_0(t_2)+ C\ln(t_2)-(n_0(t_1)+ C\ln(t_1))) \zeta_R \,dx  \\
  &+ \int_{\RR^d} C(\ln(t_2)-\ln(t_1)) \zeta_R \,dx \\
  \leq & \int_{t_1}^{t_2} \int_{\RR^d}  \partial_t (n_0+ C\ln(t)) \zeta_R \,dxdt + \int_{\RR^d} C(\ln(t_2)-\ln(t_1)) \zeta_R \,dx.
\end{align*}
Then, using equation \eqref{eqn0} and an integration by parts, we obtain
\begin{align*}
  \int_{\RR^d} |n_0(t_2)-n_0(t_1)| \zeta_R \,dx  \leq & 
\int_{t_1}^{t_2} \int_{\RR^d} \Big(p_0 \Delta \zeta_R + n_0 G(p_0) \zeta_R\Big)\,dxdt  \\
& + 2 \int_{\RR^d} C(\ln(t_2)-\ln(t_1)) \zeta_R \,dx  \\
  \leq & C(t_2-t_1)( ||\Delta \zeta_R||_{\infty}+1) +  2C(\ln(t_2)-\ln(t_1)) \int_{\RR^d} \zeta_R \,dx.
\end{align*}
Then we can choose $(t_1,t_2)$ close enough such that
$$
\int_{\RR^d}|n_0(t_2)-n_0(t_1)|\zeta_R \,dx \leq \eta.
$$
We conclude that $n_0 \in C((0,T),L^1(\RR^d))$.

\paragraph{Initial trace}
For any test function $0 \leq \zeta(x) \leq 1$, we have from \eqref{eq:nH},
\begin{align*}
 \int_{\RR^d} n_{\epsilon}(t) \zeta\,dx -\int_{\RR^d} n_{\epsilon}^{ini} \zeta\,dx   & = \int_{0}^t  \int_{\RR^d} (\Delta H(n_{\epsilon}) + n_{\epsilon} G(p_{\epsilon})) \zeta \,dxds \\
 & = \int_{0}^t  \int_{\RR^d}  (H(n_{\epsilon})  \Delta \zeta + n_{\epsilon} G(p_{\epsilon})\zeta ) \,dxds.
 \end{align*} 
 Letting $\epsilon$ going to $0$, we obtain with \eqref{hypini},
 \begin{align*}
 \int_{\RR^d} n_{0}(t) \zeta\,dx -\int_{\RR^d} n_{0}^{ini} \zeta\,dx   & = \int_{0}^t  \int_{\RR^d} (p_{0} \Delta \zeta +n_{0} G(p_{0})\zeta)\,dxds.
 \end{align*} 
 Letting $t \rightarrow 0$  we can conclude that $n_{0}(0) = n_{0}^{ini}$.

\section{Complementary relation}\label{sec:compl}

In this section we prove the complementary relation
$$
p_0^2 (\Delta p_0 + G(p_0)) =0.
$$
In the weak sense, this identity reads, for any test function $\phi$,
\begin{equation}\label{compldis}
\iint_{Q_T} \left(- 2\phi p_{0}  |\nabla p_0|^2 -  p_0^2 \nabla p_0\cdot \nabla \phi + \phi p_0^2 G(p_0)\right) \,dxdt = 0.
\end{equation}
The proof is divided into two steps.  \\

{\it 1st step.} 
In this first step we prove the inequality $\geq 0$ in \eqref{compldis}.
We start with the pressure equation \eqref{eq:p1} that we multiply by $ \epsilon$
$$ 
\epsilon \partial_t p_{\epsilon} - p_{\epsilon}(\epsilon+p_{\epsilon})\Delta p_{\epsilon} - \epsilon |\nabla p_{\epsilon}|^2= p_{\epsilon}(\epsilon+p_{\epsilon}) G(p_{\epsilon}).
$$
We multiply by a test function $\phi\in \mathcal{D}((0,T)\times \RR^d)$ and integrate,
\begin{align*}
 \iint_{Q_T} p_{\epsilon}^2 \phi (\Delta p_{\epsilon}+G(p_{\epsilon})) \,dxdt & =\epsilon \iint_{Q_T} \phi ( \partial_t p_{\epsilon}  - |\nabla p_{\epsilon}|^2 - p_{\epsilon}(\Delta p_{\epsilon}+G(p_{\epsilon}) )\,dxdt \\
 & = \epsilon \iint_{Q_T} \left( \phi\partial_t p_{\epsilon} +p_{\epsilon} \nabla p_{\epsilon}\cdot \nabla \phi - \phi p_{\epsilon} G(p_{\epsilon})\right) \,dxdt,
\end{align*}
where we use an integration by parts for the last identity.
From the estimates in Lemma \ref{lem:estim}, we have 
\begin{align*}
& \left| \epsilon \iint_{Q_T} \big(\phi\partial_t  p_{\epsilon} +p_{\epsilon} \nabla p_{\epsilon} \cdot\nabla \phi - \phi p_{\epsilon} G(p_{\epsilon})\big)\,dxdt \right| \\
& \qquad \leq  \epsilon \left(\|\phi \|_{L^\infty} \| \partial_t p_\epsilon \|_{L^1(Q_T)} + \|\nabla \phi\|_{L^\infty} P_M \| \nabla p_\epsilon \|_{L^1(Q_T)} +\|\phi\|_{L^\infty} G_{m} \| p_\epsilon \|_{L^1(Q_T)}\right) \\
& \qquad \underset{\epsilon \rightarrow 0}{\longrightarrow} 0.
\end{align*}
We deduce that for any test function $\phi\in \mathcal{D}((0,T)\times \RR^d)$,
\begin{equation}\label{step1}
\iint_{Q_T} \left(- 2\phi p_{\epsilon}  |\nabla p_{\epsilon}|^2 -  p_{\epsilon}^2 \nabla p_{\epsilon} \nabla \phi + \phi p_{\epsilon}^2 G(p_{\epsilon})\right) dxdt \underset{\epsilon \rightarrow 0}{\longrightarrow} 0.
\end{equation}
Since we have strong convergence of $(p_{\epsilon})_\epsilon$ and weak convergence of $(\nabla p_{\epsilon})_\epsilon$,
we can pass into the limit in the last two term in \eqref{step1},
$$ 
\iint_{Q_T} \left(- p_{\epsilon}^2 \nabla p_{\epsilon} \nabla \phi + \phi p_{\epsilon}^2 G(p_{\epsilon})\right)\,dxdt 
\underset{\epsilon \rightarrow 0}{\longrightarrow}
\iint_{Q_T} \left(- p_0^2 \nabla p_0 \nabla \phi + \phi p_0^2 G(p_0)\right) dxdt.
$$
Now we are looking for the limit of the first term in \eqref{step1}. We have
$ p_{\epsilon}  |\nabla p_{\epsilon}|^2  = \frac{4}{9} |\nabla p_{\epsilon}^{3/2}|^2$.
By weak convergence of $ \nabla p_{\epsilon}^{3/2} = p_{\epsilon}^{1/2}  \nabla p_{\epsilon}$ and with Jensen inequality
(since $x \mapsto x^2$ is convex),
$$ \underset{\epsilon\to 0}{\lim\inf} \iint_{Q_T} \phi p_{\epsilon}  |\nabla p_{\epsilon}|^2\,dxdt \leq \iint_{Q_T} \phi |\nabla p_0^{3/2}|^2\,dxdt.$$
Thus, we conclude from \eqref{step1} that
$$ 0 \leq \iint_{Q_T} \left(- 2\phi p_{0}  |\nabla p_0|^2 -  p_0^2 \nabla p_0 \nabla \phi + \phi p_0^2 G(p_0)\right) \,dxdt, $$
which is a first inequality for \eqref{compldis}. \\

{\it 2nd step.} Now we want to show the reverse inequality, i.e.
$$0 \geq \iint_{Q_T} \left(- 2\phi p_{0}  |\nabla p_0|^2 -  p_0^2 \nabla p_0 \nabla \phi + \phi p_0^2 G(p_0)\right) \,dxdt. $$
We know that
$$
\partial_t n_{\epsilon} -\Delta q_{\epsilon} = n_{\epsilon} G(p_{\epsilon}),
$$
with $ q_{\epsilon}  = p_{\epsilon} - \epsilon \ln(p_{\epsilon} +\epsilon).$
Thanks to the inequality $ \epsilon \ln(\epsilon) \leq \epsilon \ln(p_{\epsilon} +\epsilon)  \leq \epsilon \ln(p_M +\epsilon) $, and the strong convergence $p_\epsilon\to p_0$, we know that $q_{\epsilon} \rightarrow p_0$ as $\epsilon\to 0$.
Because
$$ \Delta q_{\epsilon}= \partial_t n_{\epsilon} -n_{\epsilon} G(p_{\epsilon}),$$
we deduce from Lemma \ref{lem:estim} that $ \Delta q_{\epsilon} \in L^{\infty}([0,T];L^{1}(\RR^d)) $. 
It gives us compactness in space but not in time. Thus, following the idea of \cite{PQV}, 
we use a regularization process '\`a la Steklov'. 

Let introduce a time regularizing kernel $\omega_{\eta} \geq 0$ such that $\mbox{supp}(\omega_{\eta}) \subset \RR_-$.
Then with the notations $ n_{\epsilon,\eta} = \omega_{\eta} *_t n_{\epsilon}$, $q_{\epsilon,\eta}= \omega_{\eta} *_t q_{\epsilon}$,
where the convolution holds only in the time variable,
\begin{equation}\label{eqconv}
 \partial_t n_{\epsilon,\eta} -\Delta q_{\epsilon,\eta} = (n_{\epsilon} G(p_{\epsilon}))*\omega_{\eta}
\end{equation}
We denote
$ U_{\epsilon} =\Delta q_{\epsilon,\eta} $, then
\begin{align*}
 U_{\epsilon} &= \partial_t n_{\epsilon,\eta}-(n_{\epsilon} G(p_{\epsilon}))*\omega_{\eta} \\
 &= n_{\epsilon} *\partial_t \omega_{\eta}-(n_{\epsilon} G(p_{\epsilon}))*\omega_{\eta}
 \end{align*}
 Since $ n_{\epsilon}$ and $n_{\epsilon} G(p_{\epsilon})$ are uniformly bounded in $W^{1,1}(Q_T)$ from Lemma \ref{lem:estim}, 
$(U_{\epsilon})_\epsilon $ is bounded in $W^{1,1}(Q_T)$ and we can extract a converging subsequence, still denoted $(U_{\epsilon})_\epsilon$, converging towards $U_0$ in $L^1_{loc}(\RR^d)$ for $\eta$ fixed. Moreover
$$ U_0 = \Delta p_{0}*\omega_{\eta}.$$
 
We multiply \eqref{eqconv} by $P'(n_{\epsilon})= \frac{\epsilon}{(1-n_{\epsilon})^2}= \frac{1}{\epsilon} (p_{\epsilon}+\epsilon)^2$,
$$
\frac{\epsilon}{(1-n_{\epsilon})^2} \partial_t n_{\epsilon,\eta} -\frac{1}{\epsilon} (p_{\epsilon}+\epsilon)^2\Delta q_{\epsilon,\eta} =\frac{1}{\epsilon} (p_{\epsilon}+\epsilon)^2 (n_{\epsilon} G(p_{\epsilon}))*\omega_{\eta}.
$$
Then, passing to the limit $\epsilon\to 0$, we obtain, thanks to the above remark
$$ 
\frac{\epsilon^2}{(1-n_{\epsilon})^2} \partial_t n_{\epsilon,\eta} \underset{\epsilon\to 0}{\longrightarrow} 
p_0^2 \Delta p_{0}*\omega_{\eta} + p_0^2(n_{0} G(p_{0}))*\omega_{\eta}.
$$
So we are left to prove that for any $\eta >0$, we have
$$ \lim_{\epsilon\to 0} \frac{\epsilon^2}{(1-n_{\epsilon})^2} \partial_t n_{\epsilon,\eta} \leq 0.$$

 We compute for a fixed $\eta >0$,
\begin{align*}
 \frac{\epsilon^2}{(1-n_{\epsilon})^2} \partial_t n_{\epsilon,\eta}(t,x)  & =
\int_{\RR}  \frac{\epsilon^2}{(1-n_{\epsilon}(t,x))^2} \partial_t n_{\epsilon} (s,x)\omega_{\eta}(t-s,x) ds  \\
& = \int_{\RR}  \frac{\epsilon^2}{(1-n_{\epsilon}(s,x))^2}  \partial_t n_{\epsilon} (s,x)\omega_{\eta}(t-s,x) ds \\
&+\int_{\RR} ( \frac{\epsilon^2}{(1-n_{\epsilon}(t,x))^2}- \frac{\epsilon^2}{(1-n_{\epsilon}(s,x))^2}) (\partial_t n_{\epsilon} (s,x)+\frac{C}{s})\omega_{\eta}(t-s,x) ds \\
&-C \int_{\RR} ( \frac{\epsilon^2}{(1-n_{\epsilon}(t,x))^2}- \frac{\epsilon^2}{(1-n_{\epsilon}(s,x))^2}) \frac{\omega_{\eta}(t-s,x)}{s} ds \\
& = \text{ I }_{\epsilon}+\text{ II }_{\epsilon}+\text{ III }_{\epsilon},
 \end{align*}
 where $C$ is a constant such that $ \partial_t n_{\epsilon} (s,x)+\frac{C}{t} \geq 0$. 
 
 For the first term we have
 \begin{align*}
\int_{\RR^d} |\text{ I }_{\epsilon}| dxds &
\leq  \epsilon \iint_{Q_T} |\partial_t p_{\epsilon} (s,x)|\omega_{\eta}(t-s,x) dxds \\
&\leq  \epsilon \|\omega_{\eta}\|_{L^\infty} \| \partial_t p_{\epsilon}\|_{L_1(Q_T)} \leq  \epsilon C_{\eta}\\
& \underset{\epsilon\to 0}{\longrightarrow} 0.
\end{align*}

For the second term, we have
$$ \frac{\epsilon^2}{(1-n_{\epsilon}(t,x))^2}  = (p_{\epsilon}+\epsilon)^2$$ and $\partial_t (p_{\epsilon}+\epsilon)^2 = 2(p_{\epsilon}+\epsilon) \partial_t p_{\epsilon} \geq - \frac{C'}{t} $. Let $0 \leq \xi \in \mathcal{C}^{\infty}_c(Q)$ and $\tau>0$ the smallest time in its support, we then have for $t \geq \tau$
$$ \partial_t (p_{\epsilon}+\epsilon)^2(t,x) \geq - \frac{C'}{\tau} .$$ 
So integrating on $(t,s) \subset (\tau, + \infty)$
$$ \frac{\epsilon^2}{(1-n_{\epsilon}(t,x))^2}- \frac{\epsilon^2}{(1-n_{\epsilon}(s,x))^2} \leq  \frac{C'}{\tau} (s-t) .$$
Then
$$ \iint_{Q} \xi \text{ II }_{\epsilon} \leq \frac{C'}{\tau} \eta  \iint_{Q} \int_{\RR} (\partial_t n_{\epsilon} (s,x)+\frac{C}{\tau})\omega_{\eta}(t-s,x) ds dx dt \leq  C'_{\tau } \eta,$$
where we use the bound on $\partial_t n$ in Lemma \ref{estimdtn}.

For the third term, since $s \geq t >0$, for any test function $\xi$ as above,
 \begin{align*}
 \iint_{Q} \xi \text{ III }_{\epsilon} = & -C \iint_{Q}  \xi\int_{\RR} ( (p_{\epsilon}(t)+\epsilon)^2- (p_{\epsilon}(s)+\epsilon)^2) \frac{\omega_{\eta}(t-s,x)}{s} ds  \\
 & \underset{\epsilon\to 0}{\longrightarrow} -C \iint_{Q} \xi \int_{\RR} ( p_{0}(t)^2- p_{0}(s)^2) \frac{\omega_{\eta}(t-s,x)}{s} dsdx dt\\
  & \underset{\epsilon\to 0}{\longrightarrow} -C \iint_{Q}  \xi \left[p_0^2(t) \int_{\RR} \frac{\omega_{\eta}(t-s,x)}{s} ds - \int_{\RR} \frac{p_{0}(s)^2}{s} \omega_{\eta}(t-s,x)ds\right] dx dt \\
& \qquad =  \underset{\eta\to 0}{o}(1).
 \end{align*}
 So for all test function $\xi$ as above, and all $\eta>0$,
$$ \iint_{Q_T} \xi (p_{0}^2  \Delta p_{0}*\omega_{\eta} + p_0^2 (n_0G(p_0))*\omega_{\eta}) dx dt \leq  \underset{\eta\to 0}{o}(1). $$
Now it remain to pass to the limit $ \eta \rightarrow 0$ in the regularization process. 
Thanks to an integration by parts,
$$
0 \geq \iint_{Q_T} (- 2\xi p_{0} \nabla p_0\cdot\nabla p_0*\omega_\eta -  p_0^2 \nabla \xi\cdot\nabla p_0 *\omega_\eta
+ \xi p_0^2 (n_0 G(p_0))*\omega_\eta )dxdt. $$
From the $L^2$ estimate on $\nabla p_0$ (Lemma \ref{lem:L2dp}) and the $L^1\cap L^\infty$ estimate on $p_0$ (Lemma \ref{lem:estim}),
we deduce that we can pass to the limit $\eta\to 0$ and get
$$
0 \geq \iint_{Q_T} (- 2\xi p_{0} |\nabla p_0|^2 -  p_0^2 \nabla \xi\cdot\nabla p_0 + \xi p_0^2 n_0 G(p_0))dxdt. 
$$

 Finally, from \eqref{n0p0}, we have $p_0 n_0 = p_0$. It concludes the proof.

%
%



\begin{thebibliography}{99}
\bigskip

\bibitem{AB} D. G. Aronson, P. H.  B\'enilan, {\it R\'egularit\'e des solutions de l'\'equation des milieux poreux dans $\RR^N$},
  C. R. Acad. Sci. Paris Sér. A-B {\bf 288} (1979), A103–A105.

\bibitem{Bellomo1} N. Bellomo, N. K. Li, P. K. Maini, {\it On the foundations of cancer modelling: selected topics, speculations, and perspectives}, Math. Models Methods Appl. Sci. (2008) 18 (4), 593--646.

\bibitem{Bellomo2} N. Bellomo, L. Preziosi, {\it Modelling and mathematical problems related to tumor evolution and its interaction with the immune system}, Math. Comput. Model. (2000) 32 (3-4), 413--542.

\bibitem{Berthelin} F. Berthelin, P. Degond, M. Delitala, M. Rascle, {\it A model for the formation and evolution of traffic jams}, Arch. Rat. Mech. Anal. {\bf 187} (2008), 185--220.

\bibitem{Berthelin2} F. Berthelin, P. Degond, V. Le Blanc, S. Moutari, M. Rascle, J. Royer, {\it A traffic-flow model with constraints for the modeling of traffic jams}, Math. Models Methods Appl. Sci. {\bf 18} (2008), 1269--1298.

\bibitem{Byrne} H. Byrne, M. A. Chaplain, {\it Growth of necrotic tumors in the presence and absence of inhibitors}, Math. Biosci. {\bf 135} (1996), 187--216.

\bibitem{BD} H.M. Byrne, D. Drasdo, {\it Individual-based and continuum models of growing cell populations: a comparison}, J. Math. Biol. (2009) {\bf 58} (4-5), 657--687.

\bibitem{BenAmar} P. Ciarletta, L.  Foret, M. Ben Amar, {\it The radial growth phase of malignant melanoma: multiphase modelling, numerical simulations and linear stability analysis}, J. R. Soc. Interface {\bf 8} (2011), 345--368.

\bibitem{Chapman} S. Chapman, T. G. Cowling, The Mathematical Theory of Non-Uniform Gases. Cambridge: Cambrigde University Press, 1970.

\bibitem{CP} M. G. Crandall, M. Pierre, {\it Regularizing effects for $u_t = \Delta \phi(u)$}, Trans. Amer. Math. Soc. {\bf 274} (1982), 159--168.

\bibitem{Cui} S. Cui, J. Escher, {\it Asymptotic behaviour of solutions of a multidimensional moving boundary problem modeling tumor growth}, Comm. Partial Differential Equations {\bf 33} (2008), 636--655.

\bibitem{Degond} P. Degond, J. Hua, {\it Self-Organized Hydrodynamics with congestion and path formation in crowds}, J. Comput. Phys. (2013) {\bf 237}, 299--319.

\bibitem{Degond2} P. Degond, J. Hua, L. Navoret,  {\it Numerical simulations of the Euler system with congestion constraint}, J. Comput. Phys. (2011) {\bf 230} 8057--8088.

\bibitem{Friedman} A. Friedman, {\it A hierarchy of cancer models and their mathematical challenges}, Mathematical models in cancer (Nashville, TN, 2002), Discrete Contin. Dyn. Syst. Ser. B (2004) 4(1), 147--159.

\bibitem{FriedmanHu} A. Friedman, B. Hu, {\it Stability and instability of Liapunov-Schmidt and Hopf bifurcation for a free boundary problem arising in a tumor model}, Trans. Am. Math. Soc. {\bf 360} (2008), 5291--5342.

\bibitem{Greenspan} H. P. Greenspan, {\it Models for the growth of a solid tumor by diffusion}, Stud. Appl. Math. {\bf 51} (1972), 317--340.

\bibitem{KP} I. Kim, N. Po\v{z}\`ar, {\it Porous medium equation to Hele-Shaw flow with general initial density}, to appear in Trans. Amer. Math. Soc.

\bibitem{Lowengrub} J. S. Lowengrub, H. B. Frieboes, F. Jin, Y.-L. Chuang, X. Li, P. Macklin, S. M. Wise, V. Cristini, {\it Nonlinear modelling of cancer: bridging the gap between cells and tumours}, Nonlinearity (2010) {\bf 23} (1), R1–R91.

\bibitem{Maury} B. Maury, {\it Prise en compte de la congestion dans les mod\`eles de mouvements de foules [Taking into account the congestion in crowd
motion models]}, In: Actes des Colloques Caen 2012-Rouen. 

\bibitem{MPQ} A. Mellet, B. Perthame, and F. Quir\`os, {\it A Hele-Shaw problem for Tumor Growth}, preprint.

\bibitem{Ewelina} C. Perrin, E. Zatorska, {\it Free/Congested two-phase model from weak solutions to multi-dimensional compressible Navier-Stokes equations}, Communications in Partial Differential Equations, (2015) 40:8, 1558--1589

\bibitem{PQV} B. Perthame, F. Quir\`os, J.-L. V\`azquez, 
 {\it The Hele-Shaw asymptotics for mechanical models of tumor growth},
  Arch. Ration. Mech. Anal. {\bf 212} (2014), 93--127.

\bibitem{PQTV} B. Perthame, F. Quir\`os, M. Tang, N. Vauchelet, {\it Derivation of a Hele-Shaw type system from a cell model with 
  active motion}, Interfaces and Free Boundaries {\bf 16} (2014), 489--508.

\bibitem{PV} B. Perthame, N. Vauchelet, {\it Incompressible limit of mechanical model of tumor growth with viscosity}, Phil. Trans. R. Soc. A 373 (2015): 20140283.

\bibitem{Prost} J. Ranft, M. Basana, J. Elgeti, J.-F. Joanny, J. Prost, F. J\"ulicher, {\it Fluidization of tissues by cell division and apoptosis}, Proc. Natl. Acad. Sci. USA {\bf 49}, 20863--20868 (2010).

\bibitem{Vazquez} J.~L. V\'{a}zquez, The porous medium
equation. Mathematical theory. Oxford Mathematical Monographs. The
Clarendon Press, Oxford University Press, Oxford, 2007. ISBN:
978-0-19-856903-9.

\end{thebibliography}
\end{document}